\documentclass[12pt]{amsart}

\usepackage{amsfonts}
\usepackage{amssymb}

\textwidth 160 mm
\textheight 230 mm
\hoffset=-15 mm
\voffset=-15 mm

\theoremstyle{plain}
\newtheorem{theorem}{Theorem}[section]
\newtheorem{proposition}[theorem]{Proposition}
\newtheorem{corollary}[theorem]{Corollary}
\newtheorem{lemma}[theorem]{Lemma}

\theoremstyle{definition}

\newtheorem{example}[theorem]{Example}

\newcommand{\lin}{{\rm lin}\,}
\newcommand{\alg}{{\rm alg}\,}

\newcommand{\bN}{\mathbb{N}}

\newcommand{\cA}{\mathcal{A}}

\newcommand{\cUT}{\mathcal{UT}}
\newcommand{\cS}{\mathcal{S}}
\newcommand{\cN}{\mathcal{N}}
\newcommand{\cF}{\mathcal{F}}
\newcommand{\cR}{\mathcal{R}}

\begin{document}

\baselineskip 7mm

\title{On algebras generated by positive operators} 

\author{Roman Drnov\v sek}

\date{\today}

\begin{abstract}
We study algebras generated by positive matrices, i.e., matrices with nonnegative entries.
Some of our results hold in more general setting of vector lattices. 
We reprove and extend some theorems that have been recently shown by Kandi\'{c} and  \v{S}ivic.
In particular, we give a more transparent proof of their result that 
the unital algebra generated by positive idempotent matrices $E$ and $F$ 
such that  $E F \ge F E$ is equal to the linear span of 
the set $\{I, E, F, E F, F E, E F E, F E F, (E F)^2, (F E)^2\}$, and so its dimension is at most $9$.
We give examples of two positive idempotent matrices that generate unital algebra 
of dimension $2n$ if $n$ is even, and of dimension $(2n - 1)$ if $n$ is odd.

We also prove that the algebra generated by positive matrices $B_1$, $B_2$, $\ldots$, $B_k$ is triangularizable
if $A B_i \ge B_i A$ ($i=1,2, \ldots, k$) for some positive matrix $A$ with distinct eigenvalues.
\end {abstract}

\maketitle

\noindent
{\it Key words}: positive matrices, positive idempotents, vector lattices, commutativity,
triangularizability \\
{\it Math. Subj. Classification (2010)}: 15A27 , 46A40\\

\section{Introduction}

Recently, Kandi\'{c} and  \v{S}ivic \cite{KS} have studied order analogs of Gerstenhaber's theorem stating that 
the dimension of the unital algebra generated by two commuting $n \times n$ matrices is at most $n$. 
They showed that the dimension of the unital algebra generated by two positive $n \times n$ matrices $A$ and $B$ 
is at most $n (n+1)/2$ provided its commutator $[A, B] = AB-BA$ is also positive (see Theorem \ref{one}). 
Here positivity of a matrix means that it has nonnegative entries. 
We prove an extension of this result in the case when the matrix $A$ has distinct eigenvalues (see Theorem \ref{finitely}).  
Under the same assumption on $A$ we then consider the unital algebra 
generated by the super left-commutant of $A$, that is the collection of all positive matrices $B$ 
such that $[A,B] \ge 0$.  We prove that the dimension of this algebra is at most $n (n+1)/2$ 
(see Corollary \ref{super left-commutant}). If $A$ is a positive diagonal matrix with distinct diagonal entries,
then this upper bound is attained (see Theorem \ref{distinct_diagonal}).

It has been also shown in \cite{KS} that $9$ is the largest dimension of the unital algebra generated by
 two positive idempotent matrices $E$ and $F$ satisfying  $E F \ge F E$ (see  Corollary \ref{main_matrix}).  
Moreover, the paper \cite{KS} provides a nontrivial example showing that this upper bound can be attained.
In our paper this result is proved in more transparent way that also gives some insight in 
constructing the just-mentioned example. Extensions to the vector lattice setting are also considered. 

In \cite{GLS}, it is shown that a unital algebra generated by two $n \times n$ matrices with quadratic minimal polynomials 
is at most $2n$-dimensional if $n$ is even, and at most $(2n - 1)$-dimensional if $n$ is odd.
Examples in \cite{GLS} show that the bounds on dimensions are sharp even in the case of idempotents.
We give such examples in which the two idempotents are also positive matrices.

\vspace{3mm}
\section{Preliminaries}

Since some of our results hold in general setting of vector lattices, 
we recall some basic definitions and properties  of vector lattices and operators on them. 
For the terminology and details not explained here we refer the reader to  \cite{AA} or \cite{AB}.

Let $L$ be a vector lattice with the positive cone $L^+$. The band 
$$ S^d :=\{x\in L:\; |x|\wedge |y|=0  \textrm{ for all }y \in S\}$$ 
is called the {\it disjoint complement} of a set $S$ of $L$.  
A band $B$ of $L$ is said to be a {\it projection band} if $L=B\oplus B^d$. 
If every band of $L$ is a projection band, we say that the vector lattice $L$ has {\it the projection property}.

Let $A$ be a positive (linear) operator on a vector lattice $L$. The {\it null ideal} $\cN(A)$ is the ideal in $L$ defined by 
$$ \cN(A) = \{ x \in L : A |x| = 0 \} .  $$
When $\cN(A) = \{0\}$, we say that the operator $A$ is {\it strictly positive}.
The {\it range ideal} $\cR(A)$ of $A$ is the ideal generated by the range of $A$, that is, 
$$ \cR(A) = \{ y \in L : \,  \exists x \in L^+ \textrm{\ \ such that \ } |y| \le A x \} . $$
An operator $A$ on $L$ is called {\it order continuous} if every net $\{x_\alpha\}$ order converging to zero is mapped
to the net $\{A x_\alpha\}$ order converging to zero as well. It is easy to verify that 
the null ideal of an order continuous positive operator is always a band of $L$.
On the other hand, the range ideal of an order continuous positive operator is not necessarily a band, even if the operator is idempotent.

\begin{example}
Let $E : l^2 \to l^2$ be the order continuous positive operator defined by 
$E x = \langle x, u \rangle u$, where $u = (2^{-1/2}, 2^{-2/2}, 2^{-3/2}, 2^{-4/2}, 2^{-5/2},  \ldots) \in l^2$. 
Since $\|u\|_2 = 1$, we have $E^2 = E$. Clearly,  $\cR(E)$ is the ideal generated by $u$, 
and it is not equal to $l^2$, 
as $(2^{-1/2}, 2 \cdot 2^{-2/2}, 3 \cdot 2^{-3/2}, 4 \cdot 2^{-4/2}, 5 \cdot 2^{-5/2},  \ldots) \not\in \cR(E)$.
On the other hand, we have $\cR(E)^d = \{0\}$.
\end{example}

We will make use of the following simple lemma.

\begin{lemma}
\label{zero}
Let $L$ be an Archimedean vector lattice. 
Let $A$ be a positive operator on $L$ such that  $\cR(A)^d = \{0\}$, and let $B$  
be an order continuous positive operator on $L$ such that $B A = 0$. Then $B= 0$.
\end{lemma}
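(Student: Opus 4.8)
The plan is to first show that $B$ annihilates the entire range ideal $\cR(A)$, and then to propagate this to all of $L$ using order density together with order continuity. The first step is immediate from positivity: if $y \in \cR(A)$, choose $x \in L^+$ with $|y| \le A x$; then $0 \le B|y| \le B(Ax) = (BA)x = 0$, so $B|y| = 0$, and since $|By| \le B|y|$ for the positive operator $B$, we conclude $By = 0$. Thus $B$ vanishes on $\cR(A)$.

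The second step rests on reading the hypothesis $\cR(A)^d = \{0\}$ as order density of the ideal $\cR(A)$ in $L$. For an ideal $J$ of an Archimedean vector lattice, $J^d = \{0\}$ is equivalent to $J$ being order dense, meaning that $x = \sup\{y \in J : 0 \le y \le x\}$ for every $x \in L^+$; this is standard (cf.\ \cite{AA,AB}), but I would reprove it since it is the heart of the matter. Given $x \in L^+$, the set $D = \{y \in \cR(A) : 0 \le y \le x\}$ is directed upward and bounded above by $x$. If $z$ is any upper bound of $D$, then replacing $z$ by $z \wedge x$ we may assume $0 \le z \le x$; supposing $z \ne x$, order density furnishes some $0 < y_0 \in \cR(A)$ with $y_0 \le x - z$, and a short induction (using $y_0 \le x - z$ and $n y_0 \le z$ to get $(n+1)y_0 \le x$, whence $(n+1)y_0 \in D$ and so $(n+1)y_0 \le z$) yields $n y_0 \le x$ for all $n$, contradicting the Archimedean property. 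Hence $\sup D = x$.

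Finally, since $D \subseteq \cR(A)$, every $y \in D$ satisfies $By = 0$ by the first step. As $D \uparrow x$, the net $\{x - y\}_{y \in D}$ order converges to $0$, so by order continuity of $B$ the net $B(x - y) = Bx - By = Bx$ order converges to $0$; a constant net order converging to $0$ must be $0$, so $Bx = 0$. This holds for every $x \in L^+$, and writing an arbitrary element as a difference of positive ones gives $B = 0$.

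The main obstacle is the middle step. Because $L$ is only assumed Archimedean and not Dedekind complete, I cannot freely form suprema, and the genuine content is precisely the Archimedean approximation-from-below showing that $\cR(A)^d = \{0\}$ forces each positive element to be the supremum of the elements of $\cR(A)$ beneath it. Once this density statement is in hand, the first step and order continuity combine mechanically to finish the argument.
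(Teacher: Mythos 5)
Your proof is correct and takes essentially the same route as the paper: first show that $B$ annihilates the range ideal $\cR(A)$, then use order density of $\cR(A)$ (the content of $\cR(A)^d = \{0\}$ in an Archimedean lattice) together with order continuity of $B$ to conclude $B=0$ on all of $L$. The only difference is one of detail: you reprove the standard density fact via the Archimedean induction (note the wording slip where you say ``order density furnishes $y_0$'' --- what actually furnishes it is the hypothesis $\cR(A)^d=\{0\}$, giving $w\in\cR(A)$ with $|w|\wedge(x-z)>0$, after which $y_0=|w|\wedge(x-z)$ lies in the ideal $\cR(A)$), whereas the paper simply invokes the existence of a net in $\cR(A)$ order converging to each $y\in L$.
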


\begin{proof}
Assume first that $0 \le y \in \cR(A)$. Then there is a positive vector $x \in L^+$  such that $y \le A x$. It follows that 
$0 \le B y \le B A x = 0$, and so $B y = 0$.

Assume now that $y \in L$. Since $\cR(A)^{d d} = L$, there exists a net $\{y_\alpha\} \subset \cR(A)$ 
order converging to $y$.  As $By_\alpha = 0$ and $B$ is order continuous, we obtain that $By = 0$. 
\end{proof}

A family $\cF$ of operators on a $n$-dimensional vector space $X$ is {\it reducible} if there exists a nontrivial subspace of $X$ that is invariant under every operator from $\mathcal F$. Otherwise, $\cF$ is {\it irreducible}.
A family $\cF$ is said to be {\it triangularizable} if there is a basis of $X$ such that all operators in $\cF$ have upper triangular representation with respect to that basis. 
Clearly, triangularizability is equivalent to the existence of a chain of invariant subspaces 
$$ \{0\} = M_0 \subset M_1 \subset M_2 \subset \cdots \subset M_n = X $$
with the dimension of $M_j$ equal to $j$ for each $j=0, 1, 2, \ldots, n$. 
Any such chain is called a {\it triangularizing chain} of $\cF$. Order analogs of these concepts are defined as follows.

A  family $\mathcal F$ of operators on an $n$-dimensional vector lattice $L$ is said to be {\it ideal-reducible} 
if there exists a nontrivial  ideal of $L$ that is invariant under every operator from $\mathcal F$.  
Otherwise, we say that $\mathcal F$ is {\it ideal-irreducible}. 
A family $\mathcal F$  is said to be {\it ideal-triangularizable} if it is triangularizable and at least one of 
(possibly many) triangularizing chains of $\mathcal F$ consists of ideals of $L$.  
More information on triangularizability can be found in \cite{RR}.

For a complex $n \times n$ matrix $A$, the  {\it commutant} $\{A\}^\prime$ is the algebra 
of all matrices $B$ such that $A B = BA$.  For a family $\mathcal F$ of complex  $n \times n$ matrices,
let $\lin (\cF)$ and  $\alg (\cF)$ denote the subspace and the algebra generated by the family $\mathcal F$, respectively. 
By $J_n$ we denote the nilpotent $n \times n$  Jordan block.
For $i, j \in \{1,2, \ldots, n \}$, let $E_{i j}$ denote the $n \times n$ matrix whose entries are all $0$ except in 
the $(i,j)$ cell, where it is $1$.

Let $A$ be a positive $n \times n$ matrix. The {\it super left-commutant} $\langle A ]$ 
is the collection of all positive matrices $B$ such that $[A,B] \ge 0$. 
Similarly, the {\it super right-commutant} $[ A \rangle$  is the collection of all positive matrices $B$ such that $[A,B] \le 0$. 
Since $B \in \langle A ]$ if and only if $B^T  \in [ A^T \rangle$, we will consider super left-commutants only.
It is easy to verify that $\langle A ]$ is an additive and multiplicative semigroup of 
positive matrices. It follows easily that  $\lin (\langle A ]) = \alg (\langle A ])$.
More about super commutants can be found in \cite{AA}. 

\vspace{3mm}
\section{Positive matrices}

Searching for order analogs of Gerstenhaber's theorem, Kandi\'{c} and  \v{S}ivic \cite{KS} have recently 
proved the following theorem \cite[Theorem 3.2]{KS}. In fact, this theorem also follows from 
\cite[Proposition 4.3]{KS_Pos}.

\begin{theorem}
\label{one} 
Let $A$ and $B$ be positive  $n \times n$ matrices such that $[A,  B] \ge 0$.
Then the unital algebra $\cA$ generated by $A$ and $B$ is triangularizable, and so its dimension is at most 
$n (n+1)/2$.
\end{theorem}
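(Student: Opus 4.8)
The plan is to reduce everything to a Perron--Frobenius analysis of the irreducible ``diagonal blocks'' of the pair $(A,B)$ and then to reassemble a triangularizing chain. Note first that once triangularizability is established the dimension bound is automatic: in a triangularizing basis every element of $\cA$ is upper triangular, and the upper triangular $n\times n$ matrices form a space of dimension $n(n+1)/2$. So the whole content is to produce a complete flag of common invariant subspaces for $A$ and $B$.

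For the reduction I would use the matrix $M:=A+B$. Since $A,B\ge 0$, the supports of $A$ and of $B$ are contained in the support of $M$, so any coordinate subspace invariant under $M$ is automatically invariant under both $A$ and $B$. Hence the Frobenius normal form of $M$ provides a single permutation that makes $A$ and $B$ (and therefore all of $\cA$) simultaneously block upper triangular, with diagonal blocks indexed by the strongly connected components of the digraph of $M$. Writing $A_t,B_t,M_t$ for the $t$-th diagonal blocks, we have $A_t+B_t=M_t$ with $M_t$ irreducible, and, because for block upper triangular matrices the diagonal block of a product is the product of the diagonal blocks, the $t$-th diagonal block of $C:=[A,B]=AB-BA$ is exactly $[A_t,B_t]$. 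In particular $[A_t,B_t]\ge 0$, being a principal submatrix of the nonnegative matrix $C$.

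The crux is the irreducible case, which I would isolate as a lemma: if $A_t,B_t\ge 0$, $M_t=A_t+B_t$ is irreducible, and $[A_t,B_t]\ge 0$, then in fact $[A_t,B_t]=0$. The argument I have in mind is this. Since $B_t=M_t-A_t$, we have $[A_t,B_t]=[A_t,M_t]=A_tM_t-M_tA_t$. Let $r=\rho(M_t)>0$ be the Perron root (a $1\times 1$ zero block being trivial) and $u>0$ a Perron eigenvector, $M_tu=ru$. Applying $[A_t,M_t]\ge 0$ to $u\ge 0$ gives $0\le [A_t,M_t]u=rA_tu-M_t(A_tu)$, that is $M_t(A_tu)\le r\,(A_tu)$ with $A_tu\ge 0$. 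Pairing with a strictly positive left Perron eigenvector of $M_t$ forces $M_t(A_tu)=r\,(A_tu)$; since $r$ is a simple eigenvalue with eigenline $\mathrm{span}(u)$, this yields $A_tu=\alpha u$ for some scalar $\alpha\ge 0$. Substituting back, $[A_t,M_t]u=rA_tu-M_t(A_tu)=r\alpha u-\alpha r u=0$. Finally $[A_t,B_t]\ge 0$ together with $[A_t,B_t]u=0$ and $u>0$ forces every entry of $[A_t,B_t]$ to vanish. I expect this lemma to be the main obstacle; the delicate point is the passage from the entrywise inequality $M_t(A_tu)\le r\,(A_tu)$ to the equality $A_tu=\alpha u$, which is exactly where irreducibility (the existence of a strictly positive left Perron vector) is used.

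With the lemma in hand the conclusion is immediate. Each diagonal pair $A_t,B_t$ commutes, hence is simultaneously triangularizable over $\bC$, so on the corresponding quotient $A$ and $B$ possess a complete flag of invariant subspaces. Refining the block flag coming from the Frobenius normal form of $M$ by these within-block flags produces a full chain of invariant subspaces for $\{A,B\}$, that is, a triangularizing chain for $\cA$, and the dimension estimate $n(n+1)/2$ follows as noted above.
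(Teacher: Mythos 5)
Your proof is correct, and it is worth noting how it sits relative to the paper: the paper does not prove this theorem itself but quotes it from Kandi\'{c}--\v{S}ivic \cite[Theorem 3.2]{KS}; the closest argument in the paper is its proof of the extension, Theorem \ref{finitely}, whose skeleton is exactly the one you propose --- pass to the sum $S=A+B$ (there $S=A+B_1+\cdots+B_k$), put $S$ in Frobenius normal form so that the whole algebra $\cA$ becomes block upper triangular with the same block structure, observe that positivity of the commutator is inherited by the ideal-irreducible diagonal blocks, and conclude that those blocks commute. Where you genuinely differ is the key irreducible-case lemma: the paper outsources it to \cite[Theorem 2.1]{BDFRZ} (a positive commutator involving an ideal-irreducible positive matrix must vanish), whereas you prove precisely this statement from scratch by a Perron--Frobenius argument --- using $[A_t,B_t]=[A_t,M_t]$, pairing the inequality $M_t(A_tu)\le r\,A_tu$ against a strictly positive left Perron vector to force equality, invoking simplicity of the Perron root to get $A_tu=\alpha u$, and then killing the nonnegative matrix $[A_t,B_t]$ by $[A_t,B_t]u=0$ with $u>0$. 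That self-contained lemma is the real content of your write-up, and it is correct. Your endgame is also simpler than the paper's: with only two generators, the commuting diagonal blocks $A_t,B_t$ are automatically simultaneously triangularizable, and the standard refinement of the block flag finishes the proof; in the paper's $k$-generator setting this fails (the blocks $B^{(i)}_{jj}$ commute with $A_{jj}$ but not necessarily with each other), which is exactly why Theorem \ref{finitely} needs the additional hypothesis that $A$ has distinct eigenvalues.
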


This result raises a question under which conditions the whole super left-commutant $\langle A ]$ 
is triangularizable. A possible answer is given by the following theorem and its corollary.

\begin{theorem}
\label{finitely} 
Let $A$ be a positive  $n \times n$ matrix with distinct eigenvalues.
Let $B_1$, $B_2$, $\ldots$, $B_k$ be positive matrices such that $[A,  B_i] \ge 0$ for all $i=1, \ldots, k$. 
Then the unital algebra $\cA$ generated by $A$, $B_1$, $B_2$, $\ldots$, $B_k$ is triangularizable, and so its dimension is at most 
$n (n+1)/2$.
\end{theorem}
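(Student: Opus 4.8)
The plan is to reduce triangularizability to the existence of a common eigenvector via the Triangularization Lemma (the standard fact that a family of operators on a finite-dimensional space is triangularizable iff every quotient of an invariant chain has a common eigenvector, equivalently iff every "block" in the construction admits a common invariant subspace of codimension one). Concretely, I would build the triangularizing chain inductively: it suffices to show that the algebra $\cA$ generated by $A, B_1, \ldots, B_k$ has a common invariant subspace of dimension $1$ (a common eigenvector), since passing to the quotient by that subspace preserves all the hypotheses — positivity is inherited on suitable quotient lattices, the eigenvalues of the induced $A$ remain distinct, and the super-left-commutant inequalities $[A, B_i] \ge 0$ descend. The distinct-eigenvalue hypothesis on $A$ is the crucial ingredient that forces a \emph{common} eigenvector rather than merely separate ones.

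**The key structural observation** is that since $A$ is a positive $n \times n$ matrix with $n$ distinct eigenvalues, by the Perron–Frobenius theory the spectral radius $\rho(A)$ is an eigenvalue with a nonnegative eigenvector, and the distinctness of eigenvalues makes $A$ diagonalizable with one-dimensional eigenspaces. I would exploit the relation $A B_i \ge B_i A$: writing $C_i = [A, B_i] = A B_i - B_i A \ge 0$, one sees that each $B_i$ "almost commutes" with $A$ in an order sense. The plan is to show that the eigenvector of $A$ corresponding to $\rho(A)$ (or a suitable spectral projection) generates a subspace — or better, an ideal — invariant under all the $B_i$. The inequality $[A, B_i] \ge 0$ applied to the Perron eigenvector should control how $B_i$ moves that eigenvector, pinning it down to remain in the span of eigenvectors associated with eigenvalues at least $\rho(A)$; distinctness then collapses this to a one-dimensional invariant subspace.

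**The main obstacle** I anticipate is establishing the common eigenvector cleanly for the whole semigroup $\langle A]$ at once, rather than for each $B_i$ separately: individual positive operators satisfying $[A, B_i] \ge 0$ need not share an eigenvector a priori, so the argument must use both the order structure and the spectral rigidity of $A$ simultaneously. I expect the right tool is to order the eigenvalues $\lambda_1 > \lambda_2 > \cdots$ (or by some fixed linear functional separating them) and argue, using $A B_i - B_i A \ge 0$ together with the sign structure of the eigenprojections of $A$, that each $B_i$ is "upper triangular" with respect to the eigenbasis of $A$ — i.e., maps the span of the top $j$ eigenvectors into itself. If this simultaneous upper-triangularity with respect to $A$'s eigenbasis can be shown, the eigenbasis of $A$ \emph{is} the triangularizing basis for the entire algebra, and the dimension bound $n(n+1)/2$ follows immediately since the algebra sits inside the upper-triangular matrices. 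Once the common eigenvector (equivalently, the invariance of the flag of $A$'s eigenspaces) is secured, the induction closes and Theorem~\ref{finitely} follows; the bound $n(n+1)/2$ is then just the dimension of the upper-triangular matrices.
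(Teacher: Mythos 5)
Your proof hinges on the claim that $[A,B_i]\ge 0$ forces each $B_i$ to preserve the flag of eigenspaces of $A$ ordered by decreasing eigenvalue (in particular, to fix the Perron eigenvector), and this claim is false. Take
$$ A=\left(\begin{matrix} 1 & 1\\ 0 & 2\end{matrix}\right), \qquad B=\left(\begin{matrix} 0 & 0\\ 0 & 1\end{matrix}\right). $$
Then $A$ is positive with distinct eigenvalues $1,2$ and $B$ is positive with $[A,B]=E_{12}\ge 0$, yet the Perron eigenvector $(1,1)^T$ of $A$ (for the eigenvalue $2=\rho(A)$) is mapped by $B$ to $(0,1)^T$, outside its span. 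In this example every member of $\langle A]$ preserves the eigenvector for the \emph{smallest} eigenvalue, while for $A^T$ (whose super left-commutant consists of lower triangular matrices) it is the \emph{largest}: the correct ordering of the eigenflag is not determined by the spectrum at all, but by the combinatorial ideal structure of $A$, and your proposal supplies no mechanism for finding it. Identifying this invariant flag is the entire content of the theorem, and it is exactly the part you leave as ``if this simultaneous upper-triangularity can be shown''. There is a second gap in the inductive set-up: positivity only makes sense on a quotient by an \emph{ideal}, and a common eigenvector (such as $(1,1)^T$ above) need not span an ideal, so after one quotient the hypotheses $B_i\ge 0$ and $[A,B_i]\ge 0$ no longer have meaning.

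The paper avoids both problems by never looking for eigenvectors of $A$. It forms $S=A+B_1+\cdots+B_k$, puts $S$ in Frobenius normal form (block upper triangular with ideal-irreducible diagonal blocks, after a permutation), and notes that domination ($0\le A\le S$, $0\le B_i\le S$) forces every element of $\cA$ into the same block pattern. On each diagonal block it invokes the positive-commutator theorem of \cite{BDFRZ}: $[A_{jj},S_{jj}]\ge 0$ with $S_{jj}$ ideal-irreducible implies $[A_{jj},S_{jj}]=0$, whence each $[A_{jj},B^{(i)}_{jj}]=0$, since a sum of positive matrices vanishes only if each summand does. The distinctness of the eigenvalues of $A$ (hence of each $A_{jj}$) enters only at the last step: the compression of $\cA$ to each diagonal block lies in the commutant of $A_{jj}$, which is diagonalizable, and block upper triangular form with diagonalizable diagonal-block algebras gives triangularizability. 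So the ordering of your flag is produced by the Frobenius form of $S$ (an ideal chain), not by eigenvalue magnitudes, and the theorem of \cite{BDFRZ} --- absent from your proposal --- is the tool that makes the diagonal blocks commute.
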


\begin{proof}
Let $S= A + B_1+ \ldots + B_k$. Then, up to similarity with a permutation matrix, we may assume that
$$ S=\left( \begin{matrix}
S_{11} & S_{12} & S_{13} & \ldots & S_{1m} \\
   0   & S_{22} & S_{23} & \ldots & S_{2m} \\
   0   &   0    & S_{33} & \ldots & S_{3m} \\
\vdots & \vdots & \vdots & \ddots & \vdots \\
   0   &   0    &   0    & \ldots & S_{mm}
\end{matrix} \right) 
\qquad \text{and} \qquad
A=\left( \begin{matrix}
A_{11} & A_{12} & A_{13} & \ldots & A_{1m} \\
   0   & A_{22} & A_{23} & \ldots & A_{2m} \\
   0   &   0    & A_{33} & \ldots & A_{3m} \\
\vdots & \vdots & \vdots & \ddots & \vdots \\
   0   &   0    &   0    & \ldots & A_{mm}
\end{matrix} \right) , $$
where $S_{11}$, $S_{22}$, $\ldots$, $S_{mm}$ are ideal-irreducible
matrices.  Since $0 \le A \le S$ and $0 \le B_i \le S$ for each $i$, every product of length $l$ formed from the matrices 
$A$, $B_1$, $\ldots$, $B_k$ is dominated by $S^l$. 
It follows that every member of  $\cA$ has the same block form as $S$,
except that the diagonal blocks are not necessarily ideal-irreducible.
Since $[A,  S] =\sum_{i=1}^k [A,  B_i] \ge 0$, we have $[A_{j j},  S_{j j}] \ge 0$ for  all $j$.

Fix $j \in \{1, 2, \ldots, m\}$.  
By \cite[Theorem 2.1]{BDFRZ}, we obtain that $[A_{j j},  S_{j j}] = 0$, as $S_{j j}$ is ideal-irreducible.
Let $B^{(i)}_{jj}$ denotes the $(j,j)$-block of the matrix $B_i$.
Since $0 = [A_{jj},  S_{jj}] = \sum_{i=1}^k [A_{jj},  B^{(i)}_{jj}]$ and $[A_{jj},  B^{(i)}_{jj}] \ge 0$, we conclude that
$[A_{jj},  B^{(i)}_{jj}] = 0$ for all $i=1, \ldots, k$.
Since $A$ has distinct eigenvalues, the same holds for $A_{jj}$, and so its commutant $\{A_{j j}\}^\prime$ is diagonalizable.
% Applying Theorem \ref{commutant} in both directions, triangularizability of the commutant
% %$\{A\}^\prime$ implies triangularizability of $\{A_{j j}\}^\prime$ for each $j$.
If  $\cA_{j j}$ denotes the algebra of all compressions to the $(j,j)$-block of the members of $\cA$,
then $\cA_{j j} \subseteq \{A_{j j}\}^\prime$, and so the algebra $\cA_{j j}$ is diagonalizable as well. 
It follows that the whole algebra $\cA$ is triangularizable. 
\end{proof}

% Theorem \ref{finitely} can be stated in the terms of the super left-commutant $\langle A ]$.

\begin{corollary}
\label{super left-commutant} 
Let $A$ be a positive  $n \times n$ matrix with distinct eigenvalues.
Then the algebra  $\alg (\langle A ]) = \lin (\langle A ])$  is triangularizable, 
and so its dimension is at most $n (n+1)/2$.
\end{corollary}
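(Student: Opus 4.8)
The plan is to reduce the statement about the (a priori infinite) super left-commutant $\langle A ]$ to the finitely-generated situation already covered by Theorem \ref{finitely}. The key observation is a dimension count: every matrix in $\langle A ]$ lives in the $n^2$-dimensional space of all $n \times n$ matrices, so the subspace $\lin (\langle A ])$ is finite-dimensional. Consequently I can extract from $\langle A ]$ a finite set $B_1, B_2, \ldots, B_k$ that spans $\lin (\langle A ])$; for instance, take a basis of $\lin (\langle A ])$ consisting of members of $\langle A ]$, which exists because $\langle A ]$ spans this subspace.

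Next I would record the membership facts needed to invoke Theorem \ref{finitely}. By definition each $B_i$ is a positive matrix with $[A, B_i] \ge 0$. Moreover $A$ itself belongs to $\langle A ]$, since $[A, A] = 0 \ge 0$, and so does the identity $I$, since $[A, I] = 0 \ge 0$; thus $I$ and $A$ lie in $\lin (\langle A ])$. Applying Theorem \ref{finitely} to the matrix $A$ (which has distinct eigenvalues by hypothesis) and the positive matrices $B_1, \ldots, B_k$, I conclude that the unital algebra $\cA'$ generated by $A, B_1, \ldots, B_k$ is triangularizable and has dimension at most $n (n+1)/2$.

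It then remains to identify $\cA'$ with $\alg (\langle A ]) = \lin (\langle A ])$. On one hand, all the generators $A, B_1, \ldots, B_k$ belong to the additive and multiplicative semigroup $\langle A ]$, so $\cA' \subseteq \alg (\langle A ])$. On the other hand, by the choice of the $B_i$ we have $\lin (\langle A ]) = \lin \{B_1, \ldots, B_k\} \subseteq \cA'$, and since $\alg (\langle A ]) = \lin (\langle A ])$ (as noted in the Preliminaries), the reverse inclusion $\alg (\langle A ]) \subseteq \cA'$ follows. Hence $\cA' = \alg (\langle A ])$, and the triangularizability together with the dimension bound transfer immediately.

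The argument is essentially a finite-reduction, so I do not expect a serious obstacle once Theorem \ref{finitely} is in hand; the only point requiring a little care is the passage from the infinite family $\langle A ]$ to a finite spanning set, which is exactly where finite-dimensionality of the matrix algebra is used, together with the already-established identity $\lin (\langle A ]) = \alg (\langle A ])$.
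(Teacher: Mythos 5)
Your proposal is correct and follows essentially the same route as the paper: use finite-dimensionality to extract positive matrices $B_1,\ldots,B_k \in \langle A ]$ spanning $\lin(\langle A ])$, then apply Theorem \ref{finitely}. The only difference is that you spell out the identification of the unital algebra generated by $A, B_1, \ldots, B_k$ with $\alg(\langle A ]) = \lin(\langle A ])$, a step the paper leaves implicit.
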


\begin{proof}
Because of finite-dimensionality there exist positive matrices  $B_1$, $B_2$, $\ldots$, $B_k$ 
in $\langle A ]$ such that $\alg (\langle A ]) = \lin \{B_i: i=1, \ldots, k\}$.  We now apply Theorem \ref{finitely}.
\end{proof}

If the matrix $A$ in Corollary \ref{super left-commutant} is diagonal, more can be said.
Consider first the special case.

\begin{proposition}
\label{left-commutant} 
Let $A$ be a positive diagonal $n \times n$ matrix  with strictly  decreasing diagonal entries.
% Then there exists a permutation matrix $P$ such that the positive diagonal matrix $P^T strictly increasing diagonal entries.
Then the algebra generated by $\langle A ]$  is equal to the algebra generated by $A$ and $J_n$, and it 
coincides with the algebra of all upper triangular matrices.
\end{proposition}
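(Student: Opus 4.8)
The plan is to compute $\langle A ]$ explicitly and show that both algebras appearing in the statement coincide with the algebra $\cUT$ of all upper triangular matrices. Write $A = {\rm diag}(a_1, \ldots, a_n)$ with $a_1 > a_2 > \cdots > a_n \ge 0$. For an arbitrary matrix $B = (b_{ij})$ one computes $[A, B]_{ij} = (a_i - a_j)\, b_{ij}$. First I would use this to identify $\langle A ]$: the requirement $B \ge 0$ says $b_{ij} \ge 0$ for all $i, j$, while $[A,B] \ge 0$ says $(a_i - a_j)\, b_{ij} \ge 0$. For $i < j$ the factor $a_i - a_j$ is positive and the latter condition is automatic; for $i = j$ it is trivial; but for $i > j$ the factor is strictly negative, forcing $b_{ij} \le 0$ and hence $b_{ij} = 0$. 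Thus $\langle A ]$ is precisely the set of all positive upper triangular matrices.

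Next I would identify the algebra it generates. Every matrix unit $E_{ij}$ with $i \le j$ is positive and upper triangular, so it lies in $\langle A ]$, and such units span $\cUT$; conversely every member of $\langle A ]$ is upper triangular. Hence $\lin (\langle A ]) = \cUT$, and since $\lin (\langle A ]) = \alg (\langle A ])$ (as observed in the preliminaries), the algebra generated by $\langle A ]$ equals $\cUT$.

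It remains to show $\alg (\{A, J_n\}) = \cUT$. Both generators are upper triangular, so $\alg (\{A, J_n\}) \subseteq \cUT$ is immediate, and the content is the reverse inclusion. Since the $a_i$ are distinct, a Vandermonde argument shows that $I, A, \ldots, A^{n-1}$ are linearly independent and therefore span the full subalgebra of diagonal matrices; in particular each idempotent $E_{ii}$ belongs to $\alg (\{A, J_n\})$. As $J_n^k$ has entry $1$ in every cell $(i, i+k)$ and $0$ elsewhere, the product $E_{ii}\, J_n^k$ equals the single matrix unit $E_{i, i+k}$. Letting $i$ and $k \ge 1$ range over all admissible values produces every $E_{ij}$ with $i < j$, and together with the diagonal units these span $\cUT$, giving $\cUT \subseteq \alg (\{A, J_n\})$.

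I expect the only genuinely nontrivial step to be the recovery of the entire diagonal subalgebra from powers of $A$, which is exactly where the strict monotonicity (in particular, the distinctness) of the diagonal entries is essential; once the individual idempotents $E_{ii}$ are in hand, extracting the off-diagonal matrix units by multiplying with the powers $J_n^k$ is routine bookkeeping.
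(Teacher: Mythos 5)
Your proposal is correct, and it proves the key inclusion by a genuinely different, more elementary route than the paper. Both proofs share the second half verbatim in spirit: Lagrange interpolation in $A$ (using distinctness of the $a_i$) produces the diagonal units $E_{ii}$, and $E_{ii}J_n^{j-i}=E_{ij}$ then yields $\cUT \subseteq \alg(\{A,J_n\})$. The difference is in how one caps $\alg(\langle A])$ from above. The paper never computes $\langle A]$; it notes $J_n \in \langle A]$ to get $\alg(\{A,J_n\}) \subseteq \alg(\langle A])$, and then invokes Corollary \ref{super left-commutant} (hence Theorem \ref{finitely}, which rests on ideal-irreducible block decompositions and the positive-commutator theorem of Bra\v{c}i\v{c} et al.) to bound $\dim \alg(\langle A]) \le n(n+1)/2$, so that the chain $\cUT \subseteq \alg(\{A,J_n\}) \subseteq \alg(\langle A])$ collapses by a dimension count. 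You instead compute $\langle A]$ exactly: from $[A,B]_{ij}=(a_i-a_j)b_{ij}$ and $B\ge 0$, the strictly negative factors $a_i-a_j$ for $i>j$ force $b_{ij}=0$, so $\langle A]$ is precisely the set of entrywise nonnegative upper triangular matrices and $\lin(\langle A])=\alg(\langle A])=\cUT$ directly. Your argument is self-contained (it would survive even if Theorem \ref{finitely} were unavailable) and yields the stronger, explicit description of $\langle A]$; the paper's argument buys economy and narrative coherence, since Proposition \ref{left-commutant} is there precisely to show that the bound of Corollary \ref{super left-commutant} is attained, and deducing it from that corollary makes the attainment claim immediate. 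One shared caveat, not a defect of your write-up relative to the paper's: both proofs tacitly treat $\alg(\{A,J_n\})$ as unital (polynomials with constant term are allowed), which matters only in the degenerate case $a_n=0$.
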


\begin{proof}
Let $\cA$ be the algebra generated by $A$ and $J_n$, let $\cS$ be the algebra generated by the super left-commutant $\langle A ]$, and let $\cUT$ be the algebra of all upper triangular matrices.
Since $[A, J_n] \ge 0$, we have $J_n \in  \langle A]$, and so $\cA \subseteq \cS$.
As $A$ has distinct diagonal entries, there exists a polynomial $p_i$ such that $p_i(A) = E_{i i}$, so that 
$E_{i i} \in \cA$ for all $i=1,2, \ldots, n$. For any $1 \le i < j \le n$ we have 
$E_{i i} J_n^{j-i} =  E_{i j}$, and so $\cUT \subseteq \cA \subseteq \cS$. 
By Corollary \ref{super left-commutant}, the dimension of the algebra $\cS$ is at most $n (n+1)/2$,
so that  we finally conclude that  $\cS = \cUT = \cA$.
\end{proof}

It is well-known that the commutant of a complex diagonal $n \times n$ matrix with distinct diagonal entries
is equal to the algebra of all diagonal matrices, and so it has dimension $n$. 
The following theorem says that the super left-commutant of a positive diagonal matrix with distinct diagonal entries spans 
maximal triangularizable algebra.

\begin{theorem}
\label{distinct_diagonal} 
Let $A$ be a positive diagonal $n \times n$ matrix with distinct diagonal entries.
Then  the algebra generated by $\langle A ]$ is permutation similar to the algebra of all upper triangular matrices,
and so its dimension is $n (n+1)/2$.
\end{theorem}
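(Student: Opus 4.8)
The plan is to reduce the statement to the already-established Proposition \ref{left-commutant} by means of a permutation similarity. The only feature distinguishing the present hypothesis from that of Proposition \ref{left-commutant} is that the distinct diagonal entries of $A$ need not be arranged in decreasing order; a reordering of the coordinates by a permutation matrix repairs this. So the whole proof amounts to checking that the super-left-commutant construction behaves well under such a reordering, and then quoting the proposition.

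First I would record that conjugation by a permutation matrix is compatible with the super-left-commutant. Let $P$ be any permutation matrix, so that $P$ and $P^{-1} = P^T$ are both positive. For arbitrary $n \times n$ matrices one has $[P^{-1} A P,\, P^{-1} B P] = P^{-1} [A, B]\, P$, and since the map $M \mapsto P^{-1} M P$ preserves entrywise positivity in both directions, it follows that $B \ge 0$ and $[A, B] \ge 0$ hold if and only if $P^{-1} B P \ge 0$ and $[P^{-1} A P,\, P^{-1} B P] \ge 0$ hold. Hence $B \in \langle A ]$ exactly when $P^{-1} B P \in \langle P^{-1} A P ]$, which yields the identity $\langle P^{-1} A P ] = P^{-1}\, \langle A ]\, P$.

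Next, because the diagonal entries of $A$ are distinct, I would choose the permutation matrix $P$ for which $A' := P^{-1} A P$ is a positive diagonal matrix whose diagonal entries are strictly decreasing. Proposition \ref{left-commutant} then applies to $A'$ and shows that the algebra generated by $\langle A' ]$ equals the algebra $\cUT$ of all upper triangular matrices. Finally, since $M \mapsto P^{-1} M P$ is an algebra isomorphism, it carries the algebra generated by $\langle A ]$ onto the algebra generated by $P^{-1}\, \langle A ]\, P = \langle A' ]$, namely onto $\cUT$. Consequently the algebra generated by $\langle A ]$ equals $P\, \cUT\, P^{-1}$, so it is permutation similar to $\cUT$ and has dimension $\dim \cUT = n(n+1)/2$.

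I do not expect a genuine obstacle here: the content is entirely in the first step, verifying that permutation conjugation respects both positivity and the sign of the commutator. Once that compatibility is in hand, the reduction to Proposition \ref{left-commutant} is immediate, and everything else is bookkeeping with the algebra isomorphism $M \mapsto P^{-1} M P$.
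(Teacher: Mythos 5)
Your proposal is correct and follows exactly the paper's own argument: reduce to Proposition \ref{left-commutant} by choosing a permutation matrix $P$ so that $P^T A P$ has strictly decreasing diagonal entries. The paper leaves the compatibility $\langle P^{-1} A P ] = P^{-1} \langle A ] P$ implicit, whereas you verify it explicitly, but the route is the same.
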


\begin{proof}
There exists a permutation matrix $P$ such that the positive diagonal matrix $P^T A P$ has 
strictly decreasing diagonal entries.
Now, we apply Proposition \ref{left-commutant}.
\end{proof}

Examples show that in  Corollary \ref{super left-commutant} we cannot omit the assumption 
on the eigenvalues of $A$. As a trivial example, we can take $A$ to be the identity matrix. 
More interesting examples can be obtained if we want, in addition, that the matrix $A$ is ideal-irreducible.

\begin{example}
Let $A = e e^T$ be an ideal-irreducible $n \times n$ matrix, where $e = (1,1, \ldots, 1)^T$. Then 
$\langle A ]$ consists of all positive multiples of doubly stochastic matrices. Recall that a positive $n \times n$ matrix $S$ is 
doubly stochastic if $S e = e$ and $S^T e = e$, that is, each of its rows and columns sums to $1$. 
Clearly, the super left-commutant $\langle A ]$ is not triangularizable provided $n\ge 3$.
The dimension of the algebra generated by $\langle A ]$ is $(n-1)^2 + 1 = n^2 - 2 n +2$ that is greater than
$n (n+1)/2$ when $n \ge 5$.
\end{example}

\vspace{3mm}
\section{Positive idempotents with positive commutators}

Let us begin with a supplement of \cite[Theorem 6.3]{KS}.

\begin{theorem}
\label{one_idem}
Let $E$ a positive idempotent operator on a Archimedean vector lattice $L$, and let $A$ be an operator on $L$
such that either $A E \ge E A$ or $A E \le E A$.

(a) If $\cN(E) = \{0\}$, then $A E = E A E$ and $(AE- EA)^2 = 0$.

(b) If $\cR(E) = L$, then $E A= E A E$ and $(AE- EA)^2 = 0$.

(c)  If $\cN(E) = \{0\}$ and $\cR(E) = L$, then $A E = E A$.

\noindent
Suppose, in addition, that the operators $A$ and $E$ are order continuous. 

(d) If $\cR(E)^d = \{0\}$, then $E A= E A E$ and $(AE- EA)^2 = 0$.

(e) If $\cN(E) = \{0\}$ and $\cR(E)^d =  \{0\}$, then $A E = E A$.

\end{theorem}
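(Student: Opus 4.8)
The plan is to introduce the commutator $C := AE - EA$ and to exploit two elementary consequences of $E^2 = E$: the decomposition
$$ C = CE + EC, \qquad \text{where } CE = AE - EAE \text{ and } EC = EAE - EA, $$
together with the vanishing of the sandwich
$$ ECE = EAE - EAE = 0 . $$
By hypothesis $C$ has a definite sign, and since $E \ge 0$ both $CE$ and $EC$ inherit that sign. The point is that $CE = 0$ is exactly the assertion $AE = EAE$, while $EC = 0$ is exactly $EA = EAE$; moreover, once one of these factors vanishes the decomposition collapses $C$ onto the other factor, so that $C^2 = (EC)(EC) = E(CE)C = 0$ when $CE = 0$, and $C^2 = (CE)(CE) = C(EC)E = 0$ when $EC = 0$. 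Thus in each of (a)--(e) it suffices to kill the appropriate one-sided factor, and the identity $(AE-EA)^2 = 0$ then comes for free.

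For (a) I would read $ECE = 0$ as $E\,(CE) = 0$. Since $\pm CE \ge 0$, applying $E$ to the positive vector $\pm CE\,x$ (for $x \in L^+$) gives $0$, so $\pm CE\,x$ lies in $\cN(E) = \{0\}$; hence $CE = 0$ on $L^+$ and therefore on $L$. For (b) I would instead read $ECE = 0$ as $(EC)\,E = 0$, so the positive operator $\pm EC$ annihilates the range of $E$; since $\cR(E) = L$, every $y \in L^+$ satisfies $y \le Ex$ for some $x \in L^+$, and monotonicity forces $0 \le \pm EC\,y \le \pm EC\,(Ex) = 0$, whence $EC = 0$. In both parts the two sign cases $AE \ge EA$ and $AE \le EA$ are handled uniformly by the $\pm$.

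For (d) the same factor $EC$ is targeted, but now the range hypothesis is only $\cR(E)^d = \{0\}$. Here order continuity of $A$ and $E$ makes $C$, and hence $\pm EC$, an order continuous positive operator, and the relation $(\pm EC)\,E = 0$ is precisely the hypothesis of Lemma~\ref{zero} with $E$ in the role of the operator whose range ideal is disjointly dense; that lemma yields $EC = 0$, and the conclusion follows as before. Parts (c) and (e) are then immediate: under their hypotheses both (a) and (b) (respectively (a) and (d)) apply, so $CE = EC = 0$ and hence $C = CE + EC = 0$, i.e.\ $AE = EA$. The only real content is the opening observation that $E^2 = E$ forces $C = CE + EC$ with $ECE = 0$; after that each part is a one-line application of exactly one of strict positivity, the range-ideal hypothesis, or Lemma~\ref{zero}. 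I expect the only place demanding care to be the bookkeeping that $\pm CE$ and $\pm EC$ are genuine positive operators, so that the order-theoretic hypotheses can be applied; this rests solely on $C$ having a fixed sign and $E$ being positive, and does not require $A$ itself to be positive.
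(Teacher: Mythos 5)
Your proof is correct, and its core coincides with the paper's: in each part you isolate the same two positive operators $CE=(AE-EA)E=AE-EAE$ and $EC=E(AE-EA)=EAE-EA$, observe $ECE=0$, and kill the relevant factor using exactly the tool the paper uses --- the hypothesis $\cN(E)=\{0\}$ in (a), domination by the range in (b), and Lemma~\ref{zero} in (d). Where you genuinely depart from the paper is in the nilpotency of the commutator: the paper proves $(AE-EA)^2=0$ by a second explicit computation (e.g.\ $E(AE-EA)^2=EA(EAE-AE)=0$ in part (a)) followed by a \emph{second} application of the order-theoretic hypothesis to the positive operator $(AE-EA)^2$, whereas your identity $C=CE+EC$ makes this step purely algebraic: once $CE=0$ (resp.\ $EC=0$), $C$ collapses onto the other one-sided factor and $C^2=E(CE)C=0$ (resp.\ $C^2=C(EC)E=0$) with no further use of $\cN(E)$, $\cR(E)$, or order continuity. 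This is a modest but real gain in economy --- it shows each order hypothesis is needed only once per part --- and it also reduces (c) and (e) to the one-liner $C=CE+EC=0$, equivalent to but tidier than the paper's ``combine (a) with (b) (resp.\ (d)).'' Your supporting remarks are all sound: $\pm CE$ and $\pm EC$ are positive because $E\ge 0$ and $\pm C\ge 0$; an operator vanishing on $L^+$ vanishes on $L=L^+-L^+$; and in (d) the operator $\pm EC$ is order continuous since it is a difference of compositions of the order continuous operators $A$ and $E$, so Lemma~\ref{zero} applies with $E$ in the role of the operator having disjointly dense range ideal.
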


\begin{proof}
We consider only the case that  $A E \ge E A$, as the other case can be treated similarly.

(a)   It follows from $E (A E - E A E) = 0$ that $A E- E A E = 0$,
since  $A E - E A E = (A E - E A) E \ge 0$ and $\cN(E) = \{0\}$.
Now, we have
$$  E (AE- EA)^2 = E A E AE - EA EA - EA^2 E + EA EA
= E A (E AE- A E) =  0 . $$
Since $(AE- EA)^2 \ge 0$ and $\cN(E) = \{0\}$, we conclude that $(AE- EA)^2 = 0$.

(b) Since  $E A E - E A= E (A E - E A) \ge 0$ and $\cR(E) = L$, 
the equality $ (E A - E A E) E = 0$ implies that $E A - E A E = 0$.
We now compute 
$$  (AE- EA)^2 E = A E A E - A E A E - E A^2 E + E A E A E
=  (E A- E A E) A E =  0 . $$
Since $(AE- EA)^2 \ge 0$ and $\cR(E) = L$, we conclude that $(AE- EA)^2 = 0$.

(c) This is a direct consequence of (a) and (b).

(d) Using Lemma \ref{zero} the proof goes similar lines as the proof of (b).

(e) This follows from (a) and (d).
\end{proof}

The following result is a slight improvement of \cite[Theorem 6.4]{KS}.

\begin{theorem}
\label{two_idem}
Let $L$ be a vector lattice, and let $E$ and $F$ be positive idempotent operators on a vector lattice $L$
such that either $E F\ge F E$ or $E F \le F E$. Let $\cA$ be 
the unital algebra generated by $E$ and $F$.  

(i) If either $\cN(E) = \{0\}$ or $\cR(E) = L$, then 
$$ \cA = \lin \{I, E, F, E F, F E, F E F\} . $$

(ii) If $\cN(E) = \{0\}$ and $\cR(E) = L$, then 
$$ \cA = \lin \{I, E, F, E F\} . $$

\noindent
If the operators $E$ and $F$ are order continuous, the assumption $\cR(E) = L$ can be replaced by a (weaker) condition
$\cR(E)^d =  \{0\}$.
\end{theorem}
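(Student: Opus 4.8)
The plan is to let Theorem \ref{one_idem}, applied with $A = F$, do all the structural work: each hypothesis on $E$ is converted into a single rewriting relation among the generators, after which the statement reduces to a finite combinatorial check that a short list of words spans $\cA$. First I would record that, because $E^2 = E$ and $F^2 = F$ collapse any repeated letter, $\cA$ is the linear span of the alternating words $I, E, F, EF, FE, EFE, FEF, EFEF, FEFE, \ldots$; so the whole point is to show that these words eventually reduce back into a fixed finite set. Note that the hypothesis $EF \ge FE$ or $EF \le FE$ is precisely the hypothesis $FE \ge EF$ or $FE \le EF$ required to invoke Theorem \ref{one_idem} with $A = F$, so both sign cases are handled at once.

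Applying that theorem produces the key relations: under $\cN(E) = \{0\}$, part (a) yields $FE = EFE$; under $\cR(E) = L$ (or, for order continuous $E$, under $\cR(E)^d = \{0\}$ via part (d)), part (b) yields $EF = EFE$; and under both assumptions part (c), respectively part (e), yields $EF = FE$. For statement (i) the crucial observation is that one such relation already suffices. From $FE = EFE$ one computes $EFEF = (EFE)F = (FE)F = FEF$ and $FEFE = F(EFE) = F(FE) = FE$, while symmetrically from $EF = EFE$ one gets $EFEF = EF$ and $FEFE = FEF$. I would then verify that $\lin \{I, E, F, EF, FE, FEF\}$ is closed under left and right multiplication by $E$ and by $F$; this is a finite table of products, each reduced by the relation just established. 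Since that span contains $I$, $E$, $F$ and is multiplicatively closed, it is a unital algebra containing the generators and hence equals $\cA$, the reverse inclusion being immediate.

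For statement (ii) the commuting relation $EF = FE$ collapses $EFE$, $FEF$, $EFEF$, and every longer word down to $\{I, E, F, EF\}$, after which the same closure check, now trivial, finishes; the order continuous addendum requires no new argument, since replacing $\cR(E) = L$ by $\cR(E)^d = \{0\}$ merely swaps the appeal to parts (b) and (c) for parts (d) and (e), which produce the identical relations $EF = EFE$ and $EF = FE$. The argument carries no analytic obstacle, as Theorem \ref{one_idem} has already absorbed all the vector-lattice content; the only thing to get right is the bookkeeping, namely confirming that the single relation coming from (a)/(b)/(d) is enough to rewrite every word, so that the stronger identity $(FE - EF)^2 = 0$ is not needed, and that the proposed six-element, respectively four-element, spanning set is genuinely closed under multiplication.
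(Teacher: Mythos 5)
Your proposal is correct and follows essentially the same route as the paper: apply Theorem \ref{one_idem} with $A = F$ to obtain the relation $EFE = FE$ (under $\cN(E)=\{0\}$, via part (a)) or $EFE = EF$ (under $\cR(E)=L$ or $\cR(E)^d=\{0\}$, via parts (b)/(d)), then reduce all alternating words accordingly, with (ii) and the order-continuous addendum following from parts (c)/(e). The only difference is that you spell out the word-reduction and closure check that the paper leaves implicit.
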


\begin{proof}
If $\cN(E) = \{0\}$, then Theorem \ref{one_idem}(a) gives that $E F E = F E$.
If $\cR(E) = L$, then Theorem \ref{one_idem}(b) implies that  $E F E = E F$.
In both cases, we conclude that $\cA = \lin \{I, E, F, E F, F E, F E F\}$, proving the assertion (i).
The assertion (ii) is a direct consequence of Theorem  \ref{one_idem}(c).
The last assertion holds because of Theorem  \ref{one_idem}(d) and (e).
\end{proof}

The proof of the main result of this section (Theorem \ref{main}) is based on the following key result.

\begin{theorem}
\label{key}
Let $L$ be a vector lattice with the projection property.
Let $E$  be an order continuous positive idempotent operator on $L$. 
Let $A$ be an order continuous positive operator on $L$ such that either $A E \ge E A$ or $A E \le E A$. Then 
$$ (AE- EA)^2 E = E (AE- EA)^2 = 0 , $$
or equivalently 
$$ (E A)^2 E = E A^2 E . $$
\end{theorem}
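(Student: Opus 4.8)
The plan is to reduce both displayed equalities to a single operator identity and then to factor the resulting operator as a product of two positive operators whose ranges and kernels are pinned down by the null ideal $\cN(E)$ and by the band $\cR(E)^{dd}$. Put $G = AE - EA$. Expanding with $E^2 = E$ shows that $EG^2$, $G^2E$ and $EG^2E$ all equal $(EA)^2E - EA^2E$, so all the displayed relations reduce to the single equation $EG^2E = 0$. I would then record the factorization $EG^2E = CH$ with $C = EG = EAE - EA$ and $H = GE = AE - EAE$, noting the purely algebraic identities $CE = EH = 0$, the fact that the range of $C$ lies in $\cR(E) \subseteq \cR(E)^{dd}$, and the fact that $E$ annihilates $\cN(E)$ (since $E|x| = 0$ forces $Ex = 0$).

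Two cases then arise according to the sign of $G$, and they are dual to one another. Suppose first $AE \ge EA$, so $C, H \ge 0$. From $EH = 0$ together with $H \ge 0$ the operator $H$ maps $L$ into $\cN(E)$. To see that $C$ annihilates $\cN(E)$, take $0 \le x$ with $Ex = 0$; then $0 \le Gx = -EAx$, while $EAx \ge 0$ by positivity, whence $EAx = 0$ and therefore $Cx = -EAx = 0$ (a general $x \in \cN(E)$ is handled by passing to $|x|$). Since $H$ lands in $\cN(E)$ and $C$ kills $\cN(E)$, we get $CH = 0$. Notably this case uses neither the projection property nor order continuity; for matrices it is related to the second case by transposition, but here it is obtained directly.

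The case $AE \le EA$ is where the remaining hypotheses become essential, and it is the main obstacle. Now $C, H \le 0$, and the band $R := \cR(E)^{dd}$ takes over the role played above by $\cN(E)$. Let $Q$ be the band projection onto $R$, which exists by the projection property; then $QE = E$. Applying the positive operator $I - Q$ to $AE \le EA$ and using $(I-Q)EA = 0$ yields $0 \le (I-Q)AE \le 0$, so $(I-Q)AE = 0$ and hence $H = AE - EAE$ maps $L$ into $R$. It remains to show that $C$ annihilates $R$. Here I localize: $E$ and $C$ leave $R$ invariant (their ranges lie in $\cR(E) \subseteq R$), the restriction $E|_R$ is an order continuous positive idempotent on $R$ whose range ideal has trivial disjoint complement in $R$, and $C|_R \cdot E|_R = (CE)|_R = 0$. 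Lemma \ref{zero}, applied inside the Archimedean band $R$, then forces $C|_R = 0$, so once more $CH = 0$. The crux is precisely this localization: one cannot expect $C = 0$ on all of $L$, since $C$ may act nontrivially on $\cR(E)^d$, so order continuity must be brought in through Lemma \ref{zero} on the band where $E$ has order-dense range, and the projection property is exactly what legitimizes passing to that band.
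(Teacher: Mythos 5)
Your proof is correct, but it takes a genuinely different route from the paper's. The paper decomposes $L$ into the four bands determined by $\cN(E)$ and $\cR(E)^{dd}$, writes $E$, $A$ and the commutator in $4\times 4$ block form, and shows block by block (using Lemma \ref{zero} and Theorem \ref{one_idem}(c)) that the commutator is strictly block upper triangular, whence its square is annihilated by $E$ on both sides. You instead reduce everything algebraically to the single identity $EG^2E = 0$ (with $G = AE - EA$), factor $EG^2E = (EG)(GE)$, and match the range of $GE$ against the kernel of $EG$: when $AE \ge EA$ the range of $GE$ lies in $\cN(E)$, which $EG$ annihilates by a pure positivity argument; when $AE \le EA$ the range of $GE$ lies in $R = \cR(E)^{dd}$ (via the band projection onto $R$), and $EG$ annihilates $R$ by Lemma \ref{zero} applied inside $R$. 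Two things your approach buys: it is much shorter, avoiding all block computations; and your argument for the case $AE \ge EA$ uses neither order continuity nor the projection property, so it proves that half of the theorem in an arbitrary vector lattice --- a genuine strengthening, since the paper invokes Lemma \ref{zero} (hence order continuity) in both cases. What the paper's heavier decomposition buys is the explicit block structure of $E$ and of the commutator, which the author exploits afterwards to construct Example \ref{example_KS}; your factorization yields no such structural picture. Two small points you glossed over, both easily repaired: identifying the range ideal of $E|_R$ inside $R$ requires the observation $E(R) = E(L)$ (immediate from $E^2 = E$), and Lemma \ref{zero} requires $R$ to be Archimedean, which holds because the projection property implies $L$ is Archimedean (a fact the paper also uses implicitly).
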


\begin{proof}
Since $E$ is order continuous, its null ideal $\cN(E)$ is a band of $L$.
Let us define the bands $L_1$, $L_2$, $L_3$ and $L_4$  by $L_1 = \cN(E) \cap \cR(E)^d$,
$L_2 = \cN(E) \cap \cR(E)^{d d}$, $L_3 = \cN(E)^d \cap \cR(E)^{d d}$ and $L_4 = \cN(E)^d \cap \cR(E)^d$.
With respect to the band decomposition $L = L_1 \oplus L_2 \oplus L_3 \oplus L_4$, the idempotent 
$E$ has the form
$$ E=\left( \begin{matrix}
   0   &  0  & 0  & 0  \\
   0   &  0  & X  & Z  \\
   0   &  0  & G  & Y  \\
   0   &  0  & 0  &  0 
\end{matrix} \right)  , $$
where $G$, $X$, $Y$ and $Z$ are positive operators on the appropriate bands. 
It follows from $E^2 = E$ that $G^2 = G$, $X G = X$, $G Y = Y$ and $Z = X Y$. Therefore, we have
$$ E=\left( \begin{matrix}
   0   &  0  & 0  & 0  \\
   0   &  0  & X G & X G Y  \\
   0   &  0  & G  & G Y  \\
   0   &  0  & 0  &  0 
\end{matrix} \right) = 
 \left( \begin{matrix}
    0   \\
    X   \\
    I   \\
    0 
\end{matrix} \right) G 
\left( \begin{matrix}
   0   &  0  & I  &  Y 
\end{matrix} \right) , $$
$\cN(G) = \{0\}$ and $\cR(G)^d = \{0\}$.
Writing $A = [A_{ij}]_{i,j=1}^4$ with respect to the same decomposition, we compute 
$$ A E=\left( \begin{matrix}
   0   &  0  & (A_ {12} X + A_{13}) G  & (A_ {12} X + A_{13}) G Y  \\
   0   &  0  & (A_ {22} X + A_{23}) G  & (A_ {22} X + A_{23}) G Y   \\
   0   &  0  & (A_ {32} X + A_{33}) G  & (A_ {32} X + A_{33}) G Y   \\
   0   &  0  & (A_ {42} X + A_{43}) G  & (A_ {42} X + A_{43}) G Y   
\end{matrix} \right) $$
and 
$$ E A=\left( \begin{matrix}
   0   &  0  &  0  &   0   \\
   X G (A_ {31}  + Y A_{41})  &  X G (A_ {32} + Y A_{42})  &  X G (A_ {33} + Y A_{43})  &  X G (A_ {34}  + Y A_{44})  \\
   G (A_ {31}  + Y A_{41})  &  G (A_ {32} + Y A_{42})  &  G (A_ {33} + Y A_{43})  &  G (A_ {34}  + Y A_{44})  \\
   0   &  0  &  0  &   0 
\end{matrix} \right) \  . $$
We now consider two cases.

{\it Case 1}:  $A E \ge E A$.  Comparing the $(3,1)$-block we conclude that $G (A_ {31}  + Y A_{41}) = 0$. 
Since $A_ {31}  + Y A_{41} \ge 0$ and $\cN(G) = \{0\}$, we have $A_ {31}  + Y A_{41} = 0$, and so 
$A_ {31}= 0$ and $Y A_{41}=0$. As $\cN(Y) = \{0\}$ we obtain that $A_{41}=0$. 
Similarly, the equality $G (A_ {32} + Y A_{42}) = 0$ implies that $A_ {32}=0$ and $A_{42}=0$. 
Comparing the $(3,3)$-block we have $A_{3 3} G \ge G (A_ {33}  + Y A_{43})$. 
If we apply the idempotent $G$ on both sides, we obtain that $G Y A_{43} G = 0$. 
Since $\cN(G) = \{0\}$, $\cR(G)^d = \{0\}$  and  $\cN(Y) = \{0\}$, we use Lemma \ref{zero} to conclude that $A_{43}=0$.
Consequently, we have the inequality $A_{3 3} G \ge G A_ {33}$, and so $A_{3 3} G = G A_{3 3}$,
by Theorem \ref{one_idem} (c). This shows that the commutator of $A$ and $E$ has the form
$$ A E - E A =\left( \begin{matrix}
   0   &  0  & +  & +  \\
   0   &  0  & +  & +  \\
   0   &  0  & 0  & +  \\
   0   &  0  & 0  &  0 
\end{matrix} \right)  , $$
where each $+$ denotes a positive block (that can be also $0$). It follows that 
$$ (A E - E A)^2 =\left( \begin{matrix}
   0   &  0  & 0  & +  \\
   0   &  0  & 0  & + \\
   0   &  0  & 0  & 0  \\
   0   &  0  & 0  &  0 
\end{matrix} \right)  , $$
and so 
$$ E (A E - E A)^2 =0 =  (A E - E A)^2 E  . $$

{\it Case 2}:  $A E \le E A$.  Comparing the $(1,3)$-block we obtain that  that $(A_ {12} X + A_{13}) G = 0$. 
Since $ A_ {12} X + A_{13}$ is a positive order continuous operator and $\cR(G)^d = \{0\}$, 
we have $A_ {12} X + A_{13} = 0$ by Lemma \ref{zero}, and so 
$A_ {12} X = 0$ and $A_{13}=0$. Because of $\cR(X)^d = \{0\}$ we finally obtain that $A_{12}=0$. 
Similarly, the equality $ (A_ {42} X + A_{43}) G = 0$ implies that $A_ {42}=0$ and $A_{43}=0$. 
Comparing the $(3,3)$-block we have $(A_ {32} X + A_{33}) G \le  G A_ {33}$. 
Applying the idempotent $G$ on both sides, we obtain that $G A_{32} X G = 0$. 
Since $\cN(G) = \{0\}$, $\cR(G)^d = \{0\}$  and  $\cR(X)^d = \{0\}$,  we use Lemma \ref{zero} to conclude that $A_{32}=0$. Consequently, we have the inequality $A_{33} G \le G A_ {33}$, and so $A_{3 3} G = G A_{3 3}$,
by Theorem \ref{one_idem} (c). This shows that the commutator of $A$ and $E$ has the form
$$ E A - A E =\left( \begin{matrix}
   0   &  0  & 0  & 0  \\
   +  &  0  & +  & +  \\
   +  &  0  & 0  & + \\
   0   &  0  & 0  &  0 
\end{matrix} \right)  , $$
It follows that 
$$ (A E - E A)^2 =\left( \begin{matrix}
   0   &  0  & 0  & 0  \\
   +   &  0  & 0  & +  \\
   0   &  0  & 0  & 0  \\
   0   &  0  & 0  & 0 
\end{matrix} \right)  , $$
and so 
$$ E (A E - E A)^2 =0 =  (A E - E A)^2 E  . $$
\end{proof}

The main result of this section now easily follows. 
With a different proof it was already shown in \cite[Theorem 6.6]{KS} under slightly weaker assumption, 
as the order continuity of the idempotent $E$ was not needed.

\begin{theorem}
\label{main}
Let $L$ be a vector lattice with the projection property.
Let $E$ and $F$  be order continuous positive idempotent operators on $L$ such that 
 $E F \ge F E$. Then $(E F)^2 E = E F E$ and $(F E)^2 F = F E F$, and so 
the unital algebra $\cA$ generated by $E$ and $F$ is equal to the linear span of 
the set 
$$ \{I, E, F, E F, F E, E F E, F E F, (E F)^2, (F E)^2\} . $$
In particular, the dimension of $\cA$ is at most $9$.
\end{theorem}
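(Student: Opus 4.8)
The plan is to derive the two stated identities from the key result (Theorem~\ref{key}) by two applications, and then to read off the spanning set by a routine reduction of words in $E$ and $F$.

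First I would apply Theorem~\ref{key} with the idempotent $E$ and the operator $A = F$. Since $E F \ge F E$ says exactly that $F E \le E F$, i.e. $A E \le E A$, the hypothesis is met, and both $E$ and $F$ are order continuous positive operators on the vector lattice $L$ with the projection property. The conclusion gives $(E F - F E)^2 E = E (E F - F E)^2 = 0$. I would then expand, using $E^2 = E$ and $F^2 = F$, to get $(E F - F E)^2 = (E F)^2 - E F E - F E F + (F E)^2$. Multiplying on the right by $E$ and simplifying (the two $F E F E$ terms cancel) collapses $(E F - F E)^2 E = 0$ to $(E F)^2 E = E F E$. A second application of Theorem~\ref{key}, now with the idempotent $F$ and the operator $A = E$ --- where $A F = E F \ge F E = F A$ --- yields $(E F - F E)^2 F = 0$, which in the same way collapses to $(F E)^2 F = F E F$.

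It then remains to identify the spanning set. Right-multiplying $(E F)^2 E = E F E$ by $F$ gives $(E F)^3 = (E F)^2$, and right-multiplying $(F E)^2 F = F E F$ by $E$ gives $(F E)^3 = (F E)^2$. Because $E^2 = E$ and $F^2 = F$, every monomial in $E$ and $F$ reduces to an alternating word, which is determined by its first letter and its length; the alternating words of length at most $4$ are precisely $E, F, E F, F E, E F E, F E F, (E F)^2, (F E)^2$, and together with $I$ these are the nine listed elements. Using the identities $E F E F E = E F E$ and $F E F E F = F E F$ (together with the consequences above), any alternating word of length at least $5$ reduces to one of length at most $4$, so these nine elements span $\cA$ and $\dim \cA \le 9$.

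The conceptual content is entirely contained in Theorem~\ref{key}; as the text preceding the statement indicates, the present result then follows easily. Accordingly the only real pitfall is bookkeeping: choosing the correct idempotent/operator pair in each of the two applications (so that the inequality points in the right direction), and checking that the cross terms cancel in the expansion so that $(E F - F E)^2 E = 0$ really reduces to the clean identity $(E F)^2 E = E F E$ rather than to some longer expression.
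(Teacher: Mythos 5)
Your proposal is correct and takes essentially the same route as the paper: the paper's proof consists precisely of applying Theorem~\ref{key} twice (once to the pair $(E, A=F)$ and once to $(F, A=E)$) to get $(EF)^2E = EFE$ and $(FE)^2F = FEF$, after which the spanning statement is declared clear. Your expansion of $(EF-FE)^2E$ and the reduction of alternating words are exactly the routine details the paper leaves implicit, and they check out.
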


\begin{proof}
Applying Theorem \ref{key} twice we obtain that $(E F)^2 E = E F E$ and $(F E)^2 F = F E F$.
The remaining conclusions of the theorem are then clear.
\end{proof}

Theorem \ref{main} can be slightly generalized using an extension theorem for positive order continuous operators
\cite[Theorem 1.65]{AB}.

\begin{theorem}
\label{Veksler}
Let $L$ be an Archimedean vector lattice.
Let $E$ and $F$  be order continuous positive idempotent operators on $L$ such that 
 $E F \ge F E$. Then the unital algebra generated by $E$ and $F$ is equal to the linear span of 
the set  
$$ \{I, E, F, E F, F E, E F E, F E F, (E F)^2, (F E)^2\} . $$
\end{theorem}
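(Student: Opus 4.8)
The plan is to reduce Theorem \ref{Veksler} to Theorem \ref{main} by passing to the Dedekind completion. Let $L^\delta$ denote the Dedekind completion of the Archimedean vector lattice $L$; it is a Dedekind complete vector lattice containing $L$ as an order dense Riesz subspace, and being Dedekind complete it automatically has the projection property, so it satisfies the lattice hypothesis of Theorem \ref{main}. The idea is to lift $E$ and $F$ to order continuous positive idempotents $\hat E, \hat F$ on $L^\delta$ with $\hat E \hat F \ge \hat F \hat E$, apply Theorem \ref{main} there to obtain $(\hat E \hat F)^2 \hat E = \hat E \hat F \hat E$ and $(\hat F \hat E)^2 \hat F = \hat F \hat E \hat F$, and then restrict these operator identities back to $L$.

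First I would invoke the extension theorem \cite[Theorem 1.65]{AB}: every order continuous positive operator from $L$ into the Dedekind complete space $L^\delta$ extends uniquely to an order continuous positive operator on $L^\delta$. Viewing $E, F \colon L \to L \subseteq L^\delta$, this produces unique order continuous positive extensions $\hat E$ and $\hat F$. The crucial structural fact to establish is that this extension procedure is both linear and multiplicative on the space of order continuous positive operators. Multiplicativity, $\widehat{ST} = \hat S \hat T$, follows from uniqueness: $\hat S \hat T$ is an order continuous positive operator on $L^\delta$ whose restriction to $L$ agrees with $ST$, because $\hat T|_L = T$ maps $L$ into $L$, on which $\hat S$ acts as $S$. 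Linearity follows likewise, since the sum of two extensions extends the sum. Granting these, idempotency is immediate: $\hat E^2 = \widehat{E^2} = \hat E$, and similarly for $\hat F$.

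Next I would transport the inequality. Since $EF$ and $FE$ are order continuous positive operators on $L$ with $FE \le EF$, the operator $EF - FE$ is order continuous and positive, so its extension is positive; by additivity of the extension this extension equals $\widehat{EF} - \widehat{FE} = \hat E \hat F - \hat F \hat E$, whence $\hat E \hat F \ge \hat F \hat E$ on $L^\delta$. Theorem \ref{main} now applies to $\hat E, \hat F$ and yields $(\hat E \hat F)^2 \hat E = \hat E \hat F \hat E$ and $(\hat F \hat E)^2 \hat F = \hat F \hat E \hat F$. Restricting both sides to $L$ (by multiplicativity the left-hand sides are the extensions of $(EF)^2 E$ and $(FE)^2 F$, the right-hand sides those of $EFE$ and $FEF$, and each extension restricts to the original operator on $L$) gives $(EF)^2 E = EFE$ and $(FE)^2 F = FEF$ on $L$. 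As in the proof of Theorem \ref{main}, these two relations let one rewrite every word in $E, F$ of length at least four in terms of shorter words, so the unital algebra generated by $E$ and $F$ is the linear span of $\{I, E, F, E F, F E, E F E, F E F, (E F)^2, (F E)^2\}$.

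I expect the main obstacle to be the careful verification that the extension map is multiplicative and order-preserving, that is, that composition and the inequality are genuinely transported to $L^\delta$, rather than any new lattice-theoretic difficulty; once the extension is shown to be a positive, linear, multiplicative correspondence, the conclusion is an immediate consequence of Theorem \ref{main}. A point worth emphasizing is that the uniqueness clause in \cite[Theorem 1.65]{AB} is precisely what forces these compatibility identities to hold exactly, and that the order density of $L$ in $L^\delta$ is what guarantees an extending operator, once it exists, is determined by its values on $L$.
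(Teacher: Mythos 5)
Your proposal is correct and follows essentially the same route as the paper: pass to the Dedekind completion $L^\delta$, use the extension theorem \cite[Theorem 1.65]{AB} to obtain order continuous positive idempotent extensions satisfying the same commutator inequality, and then invoke Theorem \ref{main}. The only difference is one of detail: you explicitly verify, via the uniqueness clause, that the extension correspondence is additive and multiplicative (hence preserves idempotency and the inequality), facts the paper's proof asserts without elaboration.
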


\begin{proof}
Let $L^\delta$ be the Dedekind completion of $L$. Since $L$ is order dense in $L^\delta$,
the operator $E : L \to L^\delta$ is order continuous. By \cite[Theorem 1.65]{AB}, there is a unique 
order continuous linear idempotent extension $E_0:  L^\delta \to L^\delta$. 
If $F_0$ is an order continuous linear idempotent extension of $F$, then  $E_0 F_0 \ge F_0 E_0$, and so 
we can apply Theorem \ref{main} to complete the proof.
\end{proof}

Theorem \ref{main} can be also slightly extended in the case when the order dual $L^{\sim}$ 
separates points of a vector lattice $L$. This condition is satisfied for normed lattices.

\begin{theorem}
\label{without_oc}
Let $L$ be a vector lattice whose order dual $L^{\sim}$ separates points of $L$.
Let $E$ and $F$  be positive idempotent operators on $L$ such that 
 $E F \ge F E$. Then the unital algebra $\cA$ generated by $E$ and $F$ is equal to the linear span of 
the set  
$$ \{I, E, F, E F, F E, E F E, F E F, (E F)^2, (F E)^2\} . $$
% $$ \alg (\{I, E, F\}) = \lin (\{I, E, F, E F, F E, E F E, F E F, (E F)^2, (F E)^2\}) . $$
\end{theorem}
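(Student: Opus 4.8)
\medskip

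The plan is to reduce Theorem \ref{without_oc} to Theorem \ref{main} by passing to an auxiliary vector lattice on which the relevant operators become order continuous and the projection property holds, exactly in the spirit of the proof of Theorem \ref{Veksler}. The essential algebraic content we want is the pair of identities $(EF)^2 E = EFE$ and $(FE)^2 F = FEF$; once these hold, the description of $\cA$ as the linear span of the nine listed elements is immediate, since every longer word in $E$ and $F$ collapses under these relations.

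First I would recall that when $L^\sim$ separates points of $L$, the canonical embedding of $L$ into its order bidual (or, more economically, into an appropriate Dedekind complete lattice built from $L^\sim$) is a lattice isomorphism onto an order dense sublattice. Concretely, I would realize $L$ as a sublattice of a Dedekind complete vector lattice $M$ in such a way that the evaluation pairing makes $L$ order dense in $M$ and so that the adjoints of $E$ and $F$ induce operators on $M$ that are automatically order continuous. The key structural input is that an operator that is the (second) adjoint of a positive operator, acting on an order dual or on a band of it, is order continuous; thus the images $E_0, F_0$ of $E, F$ on $M$ are order continuous positive idempotents satisfying $E_0 F_0 \ge F_0 E_0$, because taking adjoints twice preserves positivity of commutators and the inequality $EF \ge FE$ is inherited pointwise through the separating family of functionals.

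Having produced order continuous positive idempotents $E_0$ and $F_0$ on a vector lattice $M$ with the projection property and with $E_0 F_0 \ge F_0 E_0$, I would invoke Theorem \ref{main} to obtain $(E_0 F_0)^2 E_0 = E_0 F_0 E_0$ and $(F_0 E_0)^2 F_0 = F_0 E_0 F_0$ on $M$. The final step is to transport these identities back to $L$: since $L$ sits inside $M$ and $E_0, F_0$ restrict to $E, F$ on $L$, the corresponding words in $E$ and $F$ agree with the restrictions of the words in $E_0$ and $F_0$, so the two identities descend to $L$. This yields $(EF)^2 E = EFE$ and $(FE)^2 F = FEF$, and hence $\cA = \lin\{I, E, F, EF, FE, EFE, FEF, (EF)^2, (FE)^2\}$.

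The main obstacle will be the functional-analytic bookkeeping in the reduction step: one must choose the enveloping lattice $M$ so that all of (a) order density of $L$ in $M$, (b) the projection property (equivalently Dedekind completeness) of $M$, (c) order continuity of the extended idempotents, and (d) preservation of both idempotency and the commutator inequality hold simultaneously. The separation hypothesis on $L^\sim$ is precisely what guarantees a faithful order dense embedding, while the passage to adjoints supplies order continuity for free; the delicate point is verifying that the extensions remain idempotent and that $E_0 F_0 \ge F_0 E_0$ rather than merely $E_0 F_0 - F_0 E_0$ being positive on $L$ alone, which follows because order density lets one check the inequality on the dense sublattice $L$ and then extend by order continuity.
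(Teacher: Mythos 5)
Your reduction is in the same spirit as the paper's --- dualize so that order continuity and the projection property come for free, apply Theorem \ref{main}, then pull the relations back using the separation hypothesis --- but you pass to the \emph{bidual}, whereas the paper needs only a \emph{single} adjoint. The paper's proof: by \cite[Theorem 1.73]{AB} the order adjoints $E^\sim, F^\sim$ are order continuous positive idempotents on $L^\sim$, which is Dedekind complete and hence has the projection property; the inequality reverses, $E^\sim F^\sim = (FE)^\sim \le (EF)^\sim = F^\sim E^\sim$, which is harmless since both the hypothesis of Theorem \ref{main} and the nine-word set are invariant under swapping $E$ and $F$ (and under reversal of words); finally, the resulting linear relations in $\cA^\sim$ descend to $\cA$ because $A \mapsto A^\sim$ is injective precisely when $L^\sim$ separates points. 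No embedding of $L$ into a larger lattice and no restriction argument are needed.

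Your bidual variant does work, but not via the mechanism you single out as the ``delicate point.'' The claim that the canonical embedding of $L$ into its order bidual has order dense image is false in general: take $L = C[0,1]$, whose order dual (the regular Borel measures) separates points, and let $\Phi \in L^{\sim\sim}$ be the positive functional $\Phi(\mu) = \mu(\{1/2\})$. If $0 \le f \in C[0,1]$ satisfies $\hat f \le \Phi$, then testing against Lebesgue measure $\lambda$ gives $\int_0^1 f \, d\lambda \le \lambda(\{1/2\}) = 0$, so $f = 0$; hence no nonzero positive element of the copy of $L$ lies below $\Phi$, and order density fails. Consequently your plan to ``check the inequality on the order dense copy of $L$ and extend by order continuity'' is not available. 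Fortunately it is also unnecessary: as you yourself note in your second paragraph, idempotency of $E^{\sim\sim}, F^{\sim\sim}$ and the inequality $E^{\sim\sim}F^{\sim\sim} \ge F^{\sim\sim}E^{\sim\sim}$ follow purely from functoriality of the order adjoint ($T \ge 0$ implies $T^\sim \ge 0$, and $(ST)^\sim = T^\sim S^\sim$, applied twice), and the identities produced by Theorem \ref{main} descend to $L$ because $E^{\sim\sim} \circ j = j \circ E$ for the canonical embedding $j$, which is injective by the separation hypothesis. (Also, ``projection property (equivalently Dedekind completeness)'' is not an equivalence; you only need the implication Dedekind complete $\Rightarrow$ projection property, which is true.) If you delete the order-density reasoning and keep only the functoriality argument, your proof is correct --- just one dualization longer than the paper's.
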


\begin{proof}
By \cite[Theorem 1.73]{AB}, the order adjoint $T^\sim$ of a positive operator $T$ on $L$ is necessarily order continuous.
Therefore, $E^\sim$ and $F^\sim$ are order continuous positive idempotent operators on $L^\sim$ such that 
 $E^\sim F^\sim = (F E)^\sim \le (E F)^\sim = F^\sim E^\sim$. 
By Theorem \ref{main}, the unital algebra $\cA^\sim = \{ A^\sim : A \in \cA \}$ 
that is generated by  $E^\sim$ and $F^\sim$ is equal to the linear span of the set 
$$ \{I, E^\sim, F^\sim, E^\sim F^\sim, F^\sim E^\sim, E^\sim F^\sim E^\sim, F^\sim E^\sim F^\sim, 
(E^\sim F^\sim)^2, (F^\sim E^\sim)^2\} . $$ 
Since the order dual $L^{\sim}$ separates points of $L$, the conslusion of the theorem follows.
\end{proof}

In the special case of matrices,  Theorem \ref{main} gives the following result.

\begin{corollary}
\label{main_matrix}
Let $E$ and $F$  be positive idempotent $n \times n$ matrices such that  $E F \ge F E$. 
Then the unital algebra generated by $E$ and $F$ is equal to the linear span of 
the set $\{I, E, F, E F, F E, E F E, F E F, (E F)^2, (F E)^2\}$, and so its dimension is at most $9$.
\end{corollary}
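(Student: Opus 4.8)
The plan is to deduce Corollary \ref{main_matrix} as the finite-dimensional specialization of Theorem \ref{main}, verifying only that the hypotheses of that theorem are met automatically in the matrix setting. First I would take $L = \mathbb{R}^n$ equipped with its standard coordinatewise order, so that $L$ is a finite-dimensional Archimedean vector lattice. The positive idempotent matrices $E$ and $F$ are then positive operators on $L$ satisfying $E F \ge F E$.

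The only nontrivial checks are that the ambient lattice has the projection property and that the operators $E$ and $F$ are order continuous, since these are the two standing assumptions of Theorem \ref{main}. Both are immediate in finite dimensions: every band of $\mathbb{R}^n$ is of the form $\{x : x_i = 0 \text{ for } i \in S\}$ for some index set $S$, and its disjoint complement is the coordinate subspace on the complementary indices, so $L$ decomposes as the direct sum of any band and its disjoint complement; hence $\mathbb{R}^n$ has the projection property. Likewise, on a finite-dimensional vector lattice order convergence of a net is equivalent to convergence in the (essentially unique) norm topology, and every linear operator on a finite-dimensional space is norm-continuous, so $E$ and $F$ are automatically order continuous.

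With these two observations in hand, Theorem \ref{main} applies verbatim and yields the identities $(EF)^2 E = EFE$ and $(FE)^2 F = FEF$, together with the conclusion that the unital algebra generated by $E$ and $F$ equals the linear span of $\{I, E, F, EF, FE, EFE, FEF, (EF)^2, (FE)^2\}$. Since this spanning set has nine elements, the dimension bound of $9$ follows at once. I would close by remarking that, alternatively, the corollary is a special case of Theorem \ref{without_oc}, because the order dual of $\mathbb{R}^n$ is again $\mathbb{R}^n$ and trivially separates points; this route sidesteps even the order-continuity verification.

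I do not anticipate a genuine obstacle here, as the statement is an application rather than a fresh argument; the only point requiring care is the bookkeeping confirmation that the abstract lattice-theoretic hypotheses (projection property, order continuity) are satisfied by $\mathbb{R}^n$, which is where I would concentrate the little effort the proof demands.
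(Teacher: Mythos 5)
Your proposal is correct and is exactly the paper's route: the corollary is stated there as the immediate specialization of Theorem \ref{main} to $L = \mathbb{R}^n$, and your verification that $\mathbb{R}^n$ has the projection property and that all matrices are order continuous supplies precisely the (routine) hypothesis checks the paper leaves implicit. Your alternative remark via Theorem \ref{without_oc} is also valid, but it is not needed and matches the paper's spirit of treating the matrix case as a direct application.
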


The upper bound of Theorem \ref{main} (and Corollary \ref{main_matrix}) can be attained, as 
\cite[Example 6.8]{KS} shows. Let us rewrite this example in such a way that the idempotent $F$ has 
the block form appeared in the proof of Theorem \ref{main} and the unital algebra generated by $E$ and $F$
is ideal-triangularizable.

\begin{example}
\label{example_KS}
Define the ideal-triangularizable idempotent positive matrices $E$ and $F$ by   
$$ E = \left( \begin{matrix}
0 & 0 & 0 & 1 & 0 & 0 & 0 \cr
0 & 1 & 0 & 0 & 0 & 0 & 0 \cr
0 & 0 & 0 & 1 & 0 & 0 & 0 \cr
0 & 0 & 0 & 1 & 0 & 0 & 0 \cr
0 & 0 & 0 & 0 & 0 & 0 & 0 \cr
0 & 0 & 0 & 0 & 0 & 1 & 0 \cr
0 & 0 & 0 & 0 & 0 & 0 & 0
\end{matrix} \right) 
\ \ \ \textrm{and} \ \ \
 F = \left( \begin{matrix}
0 & 0 & 0 & 0 & 0 & 0 & 0 \cr
0 & 0 & 0 & 0 & 1 & 0 & 0 \cr
0 & 0 & 1 & 0 & 0 & 0 & 0 \cr
0 & 0 & 0 & 1 & 0 & 1 & 1 \cr
0 & 0 & 0 & 0 & 1 & 0 & 0 \cr
0 & 0 & 0 & 0 & 0 & 0 & 0 \cr
0 & 0 & 0 & 0 & 0 & 0 & 0
\end{matrix} \right)  .
$$
Direct verifications prove that $E F \ge F E$ and that the elements of the set  
$$ \{I, E, F, E F, F E, E F E, F E F, (E F)^2, (F E)^2\} $$ 
are linearly independent matrices.
\end{example}

Several papers have been published about semigroups of idempotents, called {\it bands} 
in the abstract semigroup theory (see e.g. \cite{FMRR}). 
So, the following corollary of  Theorem \ref{main} is perhaps interesting.

\begin{corollary}
Let $L$ be a vector lattice with the projection property.
Let $E$ and $F$  be order continuous positive idempotent operators on $L$ such that  $E F \ge F E$. 
Suppose that the semigroup $\cS$ generated by $E$ and $F$ consists of positive idempotents. Then  
$$ \cS = \{E, F, E F, F E, E F E, F E F\} , $$
and so the unital algebra generated by $E$ and $F$ is at most $7$-dimensional.
\end{corollary}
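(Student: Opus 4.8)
The plan is to extract from the hypothesis only the two relations that do the real work, namely that $EF$ and $FE$ are idempotent, so that $(EF)^2 = EF$ and $(FE)^2 = FE$; once these are in hand the whole statement reduces to an elementary combinatorial reduction of words in $E$ and $F$, and no analytic ingredient beyond Theorem \ref{main} is required.

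First I would normalise an arbitrary member of $\cS$. Every element of $\cS$ is a nonempty finite product of factors each equal to $E$ or $F$; using $E^2 = E$ and $F^2 = F$ to amalgamate equal adjacent factors, each such product equals an \emph{alternating} word in $E$ and $F$. These alternating words are exactly $(EF)^m$ and $(EF)^m E$ (those beginning with $E$) together with $(FE)^m$ and $(FE)^m F$ (those beginning with $F$), where $m \ge 1$ in the first and third families and $m \ge 0$ in the second and fourth. I would then collapse them using the two idempotency relations: a one-line induction gives $(EF)^m = EF$ and $(FE)^m = FE$ for every $m \ge 1$, whence $(EF)^m E = EFE$ and $(FE)^m F = FEF$ for $m \ge 1$, while $m = 0$ yields $E$ and $F$ respectively. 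Thus every element of $\cS$ lies in $\{E, F, EF, FE, EFE, FEF\}$; since conversely each of these six elements is itself a product of $E$ and $F$, we obtain the claimed equality $\cS = \{E, F, EF, FE, EFE, FEF\}$.

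For the dimension statement I would observe that the unital algebra generated by $E$ and $F$ is the linear span of $\{I\} \cup \cS$, because the linear span of a multiplicative semigroup is automatically an algebra and adjoining $I$ makes it unital. By the previous paragraph this span equals $\lin \{I, E, F, EF, FE, EFE, FEF\}$, which is at most $7$-dimensional. Equivalently, one may simply start from the nine-term spanning set furnished by Theorem \ref{main} and delete $(EF)^2$ and $(FE)^2$, which idempotency identifies with $EF$ and $FE$. There is no substantial obstacle in this argument; the only point demanding a little care is the uniform bookkeeping of the four families of alternating words and the degenerate case $m = 0$, so that no product is overlooked.
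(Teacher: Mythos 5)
Your proof is correct, but it does not follow the paper's (implied) route: the paper gives no explicit argument and presents the statement as an immediate consequence of Theorem \ref{main}, i.e., take the nine-element spanning set $\{I, E, F, EF, FE, EFE, FEF, (EF)^2, (FE)^2\}$ and use the band hypothesis to identify $(EF)^2$ with $EF$ and $(FE)^2$ with $FE$ --- which is exactly your closing ``equivalently'' remark. (Note that this shortcut yields only the dimension bound; the set equality for $\cS$ still requires the word reduction of your first paragraph, which you do supply.) Your primary argument is genuinely different, more elementary, and more general: it uses nothing but the idempotency of $E$, $F$, $EF$ and $FE$ (all of which lie in $\cS$ by hypothesis), and it is precisely the classical computation showing that the free band on two generators consists of the six elements $a$, $b$, $ab$, $ba$, $aba$, $bab$. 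Consequently, under the corollary's strong hypothesis that $\cS$ is a band, both conclusions --- $\cS = \{E, F, EF, FE, EFE, FEF\}$ and the $7$-dimensional bound on the unital algebra --- hold for any two idempotents in any associative algebra: the projection property, order continuity, positivity, the inequality $EF \ge FE$, and indeed Theorem \ref{main} itself are never used in your argument. What the paper's framing buys is only narrative placement next to Theorem \ref{main}, whose content is interesting precisely when products of $E$ and $F$ are \emph{not} assumed idempotent; what your route buys is a self-contained proof of a strictly more general statement.
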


The bound $7$ in the last theorem cannot be improved. An example showing this can be obtained from Example \ref{example_KS} by deleting the last row and the last column of  the matrices $E$ and $F$. 

\vspace{3mm}
\section{General positive idempotents}

If in Corollary \ref{main_matrix} positivity of the commutator $[E, F]$ is removed, 
the dimension of the unital algebra generated by $E$ and $F$ is at most $2 n$, 
as we have the following theorem (proved in \cite{GLS}).

\begin{theorem}
\label{quadratic}
A unital algebra generated by two $n \times n$ matrices with quadratic minimal polynomials is at most $2n$-dimensional  if $n$ is even, and at most $(2n - 1)$-dimensional if $n$ is odd.
\end{theorem}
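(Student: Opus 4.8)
The plan is to reduce to the case where both generators are diagonalizable with two distinct eigenvalues, and there to replace them by involutions, so that the whole algebra is governed by a single ``rotation'' element whose minimal polynomial carries all the information.

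First I would dispose of the repeated-root cases by a density argument. Write $\cA=\cA(A,B)$ for the unital algebra. Inside $M_n(\bC)$ the algebra $\cA$ is the span of all words in $A,B$ of length at most $n^2$, so $\dim\cA$ equals the rank of a matrix whose entries are polynomials in the entries of $A$ and $B$; hence $\dim\cA(A,B)$ is a lower semicontinuous function of $(A,B)$. Any matrix with minimal polynomial $(x-a)^2$ is a limit of matrices having two distinct eigenvalues and quadratic minimal polynomial: in the Jordan basis, perturb each $2\times 2$ block $aI_2+N$ to the upper-triangular matrix with diagonal $a\pm t$ and off-diagonal $1$, and each $1\times 1$ block $[a]$ to $[a+t]$. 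Perturbing $A$ and $B$ independently in this way and invoking lower semicontinuity, it suffices to prove the asserted bounds when both $A$ and $B$ have two distinct eigenvalues.

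In that case, after the affine substitutions $R=(2A-(a+b)I)/(a-b)$ and $S=(2B-(c+d)I)/(c-d)$, which do not change the unital algebra, I may assume $R^2=S^2=I$. Set $W=RS$. Then $W$ is invertible with $W^{-1}=SR$ and $RWR^{-1}=W^{-1}$, so the group $\langle R,S\rangle$ is a quotient of the infinite dihedral group and equals $\{W^k,\,RW^k : k\in\bZ\}$. Consequently $\cA=\bC[W]+R\,\bC[W]$, and since $\dim\bC[W]=\deg m_W\le n$, where $m_W$ is the minimal polynomial of $W$, the bound $\dim\cA\le 2n$ drops out at once.

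For the parity refinement I would analyse the intersection $\bC[W]\cap R\,\bC[W]$, since $\dim\cA=2\deg m_W-\dim(\bC[W]\cap R\,\bC[W])$. If $\deg m_W<n$ then $\dim\cA\le 2n-2$ and we are done, so assume $\deg m_W=n$; then $W$ is nonderogatory and $\bC[W]$ is exactly the commutant $\{W\}'$. Using $WR=RW^{-1}$ one checks that $Rq(W)$ commutes with $W$, equivalently lies in $\bC[W]$, precisely when $(W^2-I)q(W)=0$, and such a nonzero $q(W)$ exists iff $\gcd(x^2-1,m_W)\neq 1$, i.e. iff $W$ has an eigenvalue in $\{1,-1\}$. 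Finally, the similarity $RWR^{-1}=W^{-1}$ forces $m_W$, which here is the characteristic polynomial, to be self-reciprocal, and a self-reciprocal polynomial of odd degree $n$ must vanish at $1$ or $-1$. Hence for odd $n$ the intersection is nonzero and $\dim\cA\le 2n-1$. The main obstacle is exactly this last step: making the overlap computation precise and tying the parity of $n$ to the forced eigenvalue $\pm1$ of the rotation $W$; by contrast the $\le 2n$ bound is essentially free once one has the dihedral description $\cA=\bC[W]+R\,\bC[W]$.
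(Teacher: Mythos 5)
The paper does not actually prove Theorem \ref{quadratic}; it states the result as known and defers entirely to \cite{GLS}, so there is no internal argument to compare yours against. Your proof is correct and self-contained, and all three of its ingredients check out. (i) The reduction runs in the right direction: $\dim \cA(A,B)$ is the rank of a matrix whose entries are polynomials in the entries of $A$ and $B$ (words of length at most $n^2$ span $\cA$ by the usual filtration argument), hence lower semicontinuous, so the bound transfers from the perturbed pairs to the limit; and your explicit perturbation of a matrix with minimal polynomial $(x-a)^2$ does produce matrices with quadratic minimal polynomial and two distinct eigenvalues. (ii) With $R^2=S^2=I$ and $W=RS$, every word in $R,S$ reduces to $W^k$ or $RW^k$, and negative powers of $W$ lie in $\bC[W]$ by Cayley--Hamilton, so indeed $\cA=\bC[W]+R\,\bC[W]$ and $\dim\cA\le 2\deg m_W\le 2n$. (iii) In the case $\deg m_W=n$ the matrix $W$ is nonderogatory, so $\{W\}^\prime=\bC[W]$; your computation that $Rq(W)$ commutes with $W$ exactly when $(W^2-I)q(W)=0$ is right, a nonzero such $q(W)$ exists precisely when $m_W(1)=0$ or $m_W(-1)=0$, and since $RWR^{-1}=W^{-1}$ the spectrum of $W$ counted with multiplicity is invariant under inversion, so the eigenvalues different from $\pm 1$ pair off and odd $n$ forces an eigenvalue $1$ or $-1$, giving $\dim\cA\le 2n-1$. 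What your route buys, compared with the paper's bare citation, is a complete proof from standard linear algebra: the dihedral description of $\cA$ makes the $2n$ bound transparent, and the semicontinuity argument cleanly handles the parabolic case $(x-a)^2$, which cannot be normalized to an involution and would otherwise require a separate ad hoc analysis.
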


Examples in \cite{GLS} show that the bounds on dimensions are sharp even in the case of idempotents.
Now, we give such examples in which the two idempotents are also positive matrices.

\begin{example}
\label{even}
Let $n \in \bN$ be an even integer, so that $n = 2 k$ for some $k \in \bN$.  
If $k=1$, then define positive idempotents $E$ and $F$ by
$$ E = \left( \begin{matrix}
   1  &  1  \\
   0  &  0  
\end{matrix} \right)
\ \ \ \textrm{and} \ \ \
F = \left( \begin{matrix}
   1 &   0  \\
   1  &  0  
\end{matrix} \right) . $$
Then the algebra $\cA$ generated by $E$ and $F$ is equal to the algebra of all $2 \times 2$ matrices.
In the proof of this conclusion one can use the fact that $2 E_{1 1} = E F \in \cA$. So, the dimension of $\cA$ is $4$ when $n=2$. 

Assume now that $k\ge 2$. Define positive idempotents $E$ and $F$ by 
$$ E = \left( \begin{matrix}
   I  &   2I  \\
   0  &  0  
\end{matrix} \right)
\ \ \ \textrm{and} \ \ \
F = \left( \begin{matrix}
   I  &   0  \\
   P  &  0  
\end{matrix} \right) , $$
where $I$ is the $k \times k$  identity matrix and $P$ is the $k \times k$ permutation matrix corresponding to the largest cycle:
$$ P = \left( \begin{matrix}
0 & 0 & 0 &  \ldots & 0 & 1\cr
1 & 0 & 0 &  \ldots & 0 & 0\cr
0 & 1 & 0 & \ldots & 0 & 0\cr
\vdots & \vdots & \vdots & \ddots & \vdots &\vdots \cr
0 & 0 & 0 & \ldots & 0 & 0 \cr 
0 & 0 & 0 & \ldots & 1 & 0
\end{matrix}
\right)  . $$
Define 
$$ C = E - F = \left( \begin{matrix}
      0   &  2 I  \\
    - P   &  0  
\end{matrix} \right) . $$
Then we claim that the following matrices from the unital algebra  $\cA$ generated by $E$ and $F$ are linearly independent:
$$ C^{2 j} = \left( \begin{matrix}
   (-2 P)^j  &    0  \\
       0     &    (-2 P)^j  
\end{matrix} \right)
\ \ \ \textrm{and} \ \ \
C^{2 j+1} = \left( \begin{matrix}
       0              &  2  (- 2 P)^j  \\
 - P (-2 P)^{j}  &    0  
\end{matrix} \right) , \ \ \ \textrm{and} $$
$$ C^{2 j} E = \left( \begin{matrix}
   (-2 P)^j  &   2 (-2 P)^j  \\
       0     &      0
\end{matrix} \right)
\ \ \textrm{and} \ \ 
C^{2 j+1} E = \left( \begin{matrix}
       0          &    0   \\
 - P (-2 P)^{j}   &  (-2 P)^{j+1} 
\end{matrix} \right) , j = 1, 2, \ldots, k . $$
To verify this claim, assume that 
$$ \sum_{j=1}^k (a_j C^{2 j} + b_j C^{2 j+1} + c_j C^{2 j} E + d_j C^{2 j+1} E) = 0 $$
for some numbers $a_j$, $b_j$, $c_j$, $d_j$,  $j = 1, 2, \ldots, k$.  
It follows that 
$$ \sum_{j=1}^k (a_j+c_j) (-2 P)^{j} = 0 , $$
$$ \sum_{j=1}^k (b_j+c_j) 2 (-2 P)^{j} = 0 , $$
$$ \sum_{j=1}^k (b_j+d_j) P (-2 P)^{j} = 0 , $$
$$ \sum_{j=1}^k (a_j (-2 P)^{j} +d_j (-2 P)^{j+1})= 0 .$$
From the first three equations we obtain that $a_j = b_j = - c_j = - d_j$. Inserting this in the last equation,
we get that 
$$  a_1 (-2P) + \sum_{j=2}^k (a_j - a_{j-1}) (-2 P)^{j} - a_k  (-2)^{k} (-2 P)= 0 .$$
This yields $a_1=a_2=\ldots = a_k$ and $a_1 = (-2 )^k a_k$, and so $a_j = 0$ for all $j$. 
This complete the proof of the claim.

Since the dimension of $\cA$ is at most $2 n= 4 k$ by Theorem \ref{quadratic},
this dimension must be equal to $2 n$.
\end{example}

\begin{example}
Let $n = 2 k +1$ for some $k \in \bN$. Take $E$, $F$ and $C = E- F$ as in Example \ref{even},  and let 
$$ \tilde{E} =  \left( \begin{matrix}
   1  &   0 \\
   0  &  E  
\end{matrix} \right)
\ \ \ \textrm{and} \ \ \
\tilde{F} = \left( \begin{matrix}
   0  &  0  \\
   0  &  F  
\end{matrix} \right) . $$
Then $\tilde{E}$ and $\tilde{F}$ are positive idempotent matrices, and the algebra $\cA$ generated by them 
has dimension $4 k+1 = 2 n - 1$. In the proof of the last claim (for $k \ge 2$) one can use the fact that 
$$  \left( \begin{matrix}
   1  &   0 \\
   0  &  (-2)^k I   
\end{matrix} \right) = 
 \left( \begin{matrix}
   1  &   0 \\
   0  &  C^{2 k}  
\end{matrix} \right) = (\tilde{E}-\tilde{F})^{2k} \in \cA \ \ \ \textrm{and} \ \ \
 \left( \begin{matrix}
   1  &   0 \\
   0  &  4^k I   
\end{matrix} \right) = 
 \left( \begin{matrix}
   1  &   0 \\
   0  &  (-2)^k I   
\end{matrix} \right)^2 \in \cA 
$$
imply that 
$$ \left( \begin{matrix}
   1  &   0 \\
   0  &   0   
\end{matrix} \right)  \in \cA . $$
\end{example}

{\it Acknowledgments.}
The author acknowledges the financial support from the  Slovenian Research Agency
 (research core funding No. P1-0222).
%  This work was supported in part by grant P1-0222 of Slovenian Research Agency.
He is also thankful to Klemen \v{S}ivic for useful observations, and to Marko Kandi\'{c} for stating Theorem \ref{Veksler} and for providing comments on the manuscript.

% The last example should be viewed in conjunction with the following theorem for more general $n \times n$ matrices (see \cite{GLS}).

\vspace{2mm}

\baselineskip 6mm
\noindent
Roman Drnov\v sek \\
Department of Mathematics \\
Faculty of Mathematics and Physics \\
University of Ljubljana \\
Jadranska 19 \\
SI-1000 Ljubljana, Slovenia \\
e-mail : roman.drnovsek@fmf.uni-lj.si 

\end{document}